\numberwithin{equation}{section}
\theoremstyle{plain}
\newtheorem{theorem}{Theorem}[section]
\newtheorem{proposition}[theorem]{Proposition}
\newtheorem{lemma}[theorem]{Lemma}
\newtheorem{corollary}[theorem]{Corollary}
\theoremstyle{definition}
\newtheorem{remark}[theorem]{Remark}
\begin{document}

\title {A note on $L^2$-boundary integrals of the Bergman kernel}

\keywords{Bergman kernel; Pluricomplex Green function; Weighted Bergman projections; $L^2$-boundary integrals}

\email{ttp754@uowmail.edu.au}
  
\author[Phung Trong Thuc]{Phung Trong Thuc}

\address
	{Institute for Mathematics and its Applications, School of Mathematics and Applied Statistics, University of Wollongong, Wollongong, NSW 2522, AUSTRALIA.}	
\thanks{The author was supported by PhD scholarship in ARC grant DE140101366.}	

\subjclass[2010]{Primary 32A25; Secondary 32W05.}
	
\begin{abstract}
For any bounded convex domain $\Omega$ with $C^{2}$ boundary in $\mathbb{C}^{n}$,
we show that there exist positive constants $C_{1}$ and $C_{2}$ such that
\[
C_{1}\sqrt{\dfrac{K\left(w,w\right)}{\delta\left(w\right)}}\leq\left\Vert K\left(\cdot,w\right)\right\Vert _{L^{2}\left(\partial\Omega\right)}\leq C_{2}\sqrt{\dfrac{K\left(w,w\right)}{\delta\left(w\right)}},
\]
for any $w\in\Omega$. Here $K$ is the Bergman kernel of $\Omega$, and $\delta$ is the distance-to-boundary
function. 
\end{abstract}

\maketitle
\section{Introduction}

Throughout this note, $\Omega$ will be a bounded pseudoconvex domain
in $\mathbb{C}^{n}$, and $\delta:\Omega\rightarrow\mathbb{R}^{+}$
is the distance function to the boundary, $\delta\left(z\right):=\inf\left\{ \left\Vert z-w\right\Vert :w\in\partial\Omega\right\} $. The Bergman kernel $K\left(z,w\right)$ of $\Omega$ is the reproducing kernel for
the space of square-integrable holomorphic functions. That is, $K:\Omega\times\Omega\rightarrow\mathbb{C}$ is the function
such that 
\[
f\left(z\right)=\intop_{\Omega}K\left(z,w\right)f\left(w\right)dV\left(w\right),\forall f\in A^{2}\left(\Omega\right).
\]
Here $A^{2}\left(\Omega\right)$ denotes the space of square-integrable
holomorphic functions on $\Omega$, and $dV$ is Lebesgue measure.

It was suggested by Catlin (\cite{Catl}) for studying the boundary behavior of the Bergman kernel, by starting with simple domains, such as ellipsoids. We are interested in obtaining estimates on the $L^{2}$-boundary
norm of the Bergman kernel for convex domains. When $\Omega$ is a smoothly convex domain,  the function
$K\left(\cdot,w\right)$ is smooth up to the boundary, for any fixed
$w\in\Omega$, due to the work of Boas and Straube, see \cite{BoSt91}. Thus the boundary integral $\intop_{\partial\Omega}\left|K\left(z,w\right)\right|^{2}d\sigma\left(z\right)$
is well-defined. Here $d\sigma$ is the standard surface measure. Even if we drop the smoothness of $\partial\Omega$, we can still
conclude that $K\left(\cdot,w\right)\in L^{2}\left(\partial\Omega\right)$
in the trace sense. This follows from the fact that any bounded convex
domain has a Lipschitz boundary (\cite{Gri11}), and the Bergman projection is bounded
in the Sobolev space $W^{\left.1\right/2}\left(\Omega\right)$, see \cite{MiSh01}. To avoid technical reasons, we will restrict ourselves to the case $\partial\Omega\in C^{2}$. The purpose of this paper is to estimate the norm $\left\Vert K\left(\cdot,w\right)\right\Vert _{L^{2}\left(\partial\Omega\right)}$
as $w$ varies in $\Omega$.

The main result is stated as follows.

\begin{theorem}\label{Main-1}
Let $\Omega$ be a bounded convex domain with $C^{2}$ boundary in $\mathbb{C}^{n}$. Then
\begin{equation}\label{eq:t4}
C_{1}\sqrt{\dfrac{K\left(w,w\right)}{\delta\left(w\right)}}\leq\left\Vert K\left(\cdot,w\right)\right\Vert _{L^{2}\left(\partial\Omega\right)}\leq C_{2}\sqrt{\dfrac{K\left(w,w\right)}{\delta\left(w\right)}},
\end{equation}
for any $w\in\Omega$. Here $C_{1}$ is a positive constant depending on $\Omega$,
and $C_{2}=\sqrt{4en+1}$.
\end{theorem}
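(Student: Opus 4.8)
The plan is to reformulate both inequalities as statements about how the on-diagonal Bergman kernel $K_{\Omega'}(w,w)$ reacts when $\Omega$ is shrunk near its boundary. Write $f=K(\cdot,w)$, so that the reproducing property gives $\|f\|_{L^2(\Omega)}^2=K(w,w)$, while $\|f\|_{L^2(\partial\Omega)}^2=\int_{\partial\Omega}|f|^2\,d\sigma$ is the quantity to be estimated. For $t>0$ I would introduce the inner parallel domain $\Omega_t=\{z\in\Omega:\delta(z)>t\}$ and the shell mass $I(t)=\int_{\{0<\delta<t\}}|f|^2\,dV$, so that by the co-area formula together with continuity of $f$ up to $\partial\Omega$ (available since $\partial\Omega\in C^2$) one has $I(t)/t\to\|f\|_{L^2(\partial\Omega)}^2$ as $t\to0^+$. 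Thus it suffices to control $I(t)$ for small $t$, and the whole problem is transferred to the behaviour of $K_{\Omega_t}(w,w)$ near $t=0$.

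Two elementary inequalities would drive the argument. First, testing the extremal characterization $K_{\Omega_t}(w,w)=\sup\{|g(w)|^2:g\in A^2(\Omega_t),\ \|g\|_{L^2(\Omega_t)}\le1\}$ against $f$ itself gives $K_{\Omega_t}(w,w)\ge K(w,w)^2/(K(w,w)-I(t))\ge K(w,w)+I(t)$ (valid for small $t$), hence $I(t)\le K_{\Omega_t}(w,w)-K(w,w)$. Second, I would compare $\Omega_t$ with the homothety $\Omega^s=w+s(\Omega-w)$, which satisfies $K_{\Omega^s}(w,w)=s^{-2n}K(w,w)$ and, being built from convex combinations of $w$ with points of $\Omega$, lies in $\Omega$. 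Using support functions centred at $w$ one has $h_{\Omega_t}=h_\Omega-t$ and $h_{\Omega^s}=s\,h_\Omega$, and since $\min_{|u|=1}h_\Omega(u)=\delta(w)$ (the inscribed ball $B(w,\delta(w))\subset\Omega$), the inclusion $\Omega^{1-t/\delta(w)}\subseteq\Omega_t$ follows at once. Domain monotonicity of the Bergman kernel then yields $K_{\Omega_t}(w,w)\le(1-t/\delta(w))^{-2n}K(w,w)$.

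For the upper bound I combine these: $I(t)\le K_{\Omega_t}(w,w)-K(w,w)\le\big[(1-t/\delta(w))^{-2n}-1\big]K(w,w)$, so dividing by $t$ and letting $t\to0^+$ gives $\|f\|_{L^2(\partial\Omega)}^2\le\tfrac{2n}{\delta(w)}K(w,w)$. This is the mechanism behind the upper estimate; I expect the explicit constant $C_2=\sqrt{4en+1}$ to come from replacing the passage to the limit by a robust estimate at a fixed, optimised value of $t$, the factor $e$ being the usual cost of bounding $(1-t/\delta(w))^{-2n}$ by an exponential and then optimising.

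The lower bound is the main obstacle, and this is why $C_1$ is only claimed to depend on $\Omega$. The chain above runs the wrong way for it: I instead need a genuine lower bound $I(t)\ge c\,t\,K(w,w)/\delta(w)$ on the mass of $|f|^2$ in the boundary shell, whereas the global homothety comparison only produces the rate $1/D$ with $D=\max_{|u|=1}h_\Omega(u)$ rather than $1/\delta(w)$. I would first dispose of the region $\delta(w)\ge\delta_0$ by compactness, where the ratio in the theorem is bounded below by continuity. For $\delta(w)$ small I would localise near the nearest boundary point $\zeta_0$: the $C^2$ hypothesis supplies a uniform interior ball $B(c,R_0)\subseteq\Omega$ tangent at $\zeta_0$ with $\delta_B(w)=\delta(w)$, and the task becomes showing that $|K(\cdot,w)|$ stays comparable to $K(w,w)$ on a boundary patch around $\zeta_0$ of the correct anisotropic size (of surface measure $\asymp\delta(w)^n$ in the model case), so that $\int_{\partial\Omega}|f|^2\gtrsim K(w,w)^2\delta(w)^n\gtrsim K(w,w)/\delta(w)$. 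Establishing this pointwise persistence of the kernel up to the boundary is the hard, domain-dependent step; here I would expect the sublevel sets of the pluricomplex Green function of $\Omega$ to furnish the right localisation region, and a weighted ($L^2$, Donnelly--Fefferman/Berndtsson-type) Bergman estimate to deliver the required lower bound on the shell mass.
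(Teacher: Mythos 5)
Your \textbf{upper bound} takes a genuinely different route from the paper, and its core mechanism is correct: the extremal characterization gives $I(t)\le K_{\Omega_t}(w,w)-K(w,w)$, and the homothety-plus-monotonicity comparison gives $K_{\Omega_t}(w,w)\le(1-t/\delta(w))^{-2n}K(w,w)$, so $I(t)\le\bigl[(1-t/\delta(w))^{-2n}-1\bigr]K(w,w)$; this would in fact yield $C_2=\sqrt{2n}$, which beats the paper's $\sqrt{4en+1}$ and is attained at the center of the ball, so your approach is not only different but quantitatively better. Two repairs are needed, though. First, the support-function identity $h_{\Omega_t}=h_\Omega-t$ is false for inner parallel bodies of a convex domain (try a square); the inclusion $\Omega^{1-t/\delta(w)}\subseteq\Omega_t$ that you actually need follows instead from concavity of $\delta$ on a convex domain: $\delta(w+s(z-w))\ge(1-s)\delta(w)+s\delta(z)>t$ when $s=1-t/\delta(w)$. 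Second, and more seriously, your passage $I(t)/t\to\|K(\cdot,w)\|_{L^2(\partial\Omega)}^2$ is justified by ``continuity of $f$ up to $\partial\Omega$, available since $\partial\Omega\in C^2$''; that is not available: Boas--Straube regularity requires $C^\infty$ boundary, and at $C^2$ regularity the boundary norm is only defined as a Hardy-space $\limsup$ of surface integrals. A bound on the Ces\`aro averages $I(t)/t$ does not control such a $\limsup$ for a general function; one needs the harmonic-function result the paper quotes as Lemma \ref{chhard}. The step is repairable: from $I(t)\le Ct$ an integration by parts gives $\limsup_{r\rightarrow1^{-}}(1-r)\intop_{\Omega}\left|K\left(\cdot,w\right)\right|^{2}\delta^{-r}dV\le C$, and Lemma \ref{chhard} then identifies this with the boundary norm. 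But as written this is a gap, not a routine limit.

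The \textbf{lower bound} contains the genuine gap, and it is the crux of the theorem. You propose to prove pointwise persistence $|K(z,w)|\gtrsim K(w,w)$ on a boundary patch near the closest point $\zeta_0$ of surface measure $\asymp\delta(w)^n$, and then conclude $\intop_{\partial\Omega}|K|^{2}d\sigma\gtrsim K(w,w)^{2}\delta(w)^{n}\gtrsim K(w,w)/\delta(w)$. The final inequality silently requires $K(w,w)\gtrsim\delta(w)^{-n-1}$, which is false for general convex domains: it holds near strongly convex boundary points, but fails near weakly convex ones (for $\Omega=\{|z_1|^2+|z_2|^{2m}<1\}$ with $m\ge2$, a smooth bounded convex domain, $K(w,w)\approx\delta(w)^{-2-1/m}\ll\delta(w)^{-3}$ as $w\rightarrow(1,0)$). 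Replacing $\delta^n$ by the correct anisotropic patch fixes the bookkeeping but not the real problem: pointwise lower bounds on the off-diagonal Bergman kernel near the boundary are not available for general $C^2$ convex domains (which include infinite-type domains), and nothing in your sketch produces them; the weighted $L^2$ estimates you defer to yield integrated, not pointwise, information. This is exactly why the paper never proves pointwise persistence. Its mechanism is integrated throughout: Herbort's inequality \eqref{herbo1} applied to $f=K(\cdot,w)$ gives $\intop_{\{G(\cdot,w)<-1\}}|K(z,w)|^{2}dV\ge e^{-2n}K(w,w)$; the convexity estimate \eqref{eq:bnew1} places the sublevel set $\{G(\cdot,w)<-1\}$ inside the collar $\{\delta\le c_0\delta(w)\}$; and the coarea formula together with the Hardy-norm equivalence bounds the collar integral by $c_0c_1\delta(w)\intop_{\partial\Omega}|K(z,w)|^{2}d\sigma$. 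Identifying Herbort's inequality (Proposition \ref{HER}) as the substitute for pointwise persistence is the missing idea in your proposal; without it, your lower bound does not go through.
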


Our method is elementary, basically being
based on a weighted version of Bergman projections by Berndtsson
and Charpentier (\cite{BeCh00}), and relations between the pluricomplex
Green function and the Bergman kernel. Our approach is motivated by the work of Chen and Fu (\cite{ChFu11}) on
the comparison of the Bergman and Szeg{\"o}  kernels. In fact, by the definition of the Szeg{\"o}  kernel, from Theorem
\ref{Main-1} we obtain that for any bounded convex domain $\Omega$
in $\mathbb{C}^{n}$,
\[
\dfrac{S\left(z,z\right)}{K\left(z,z\right)}\geq\dfrac{\delta\left(z\right)}{4en+1},\forall z\in\Omega.
\]
Here $S$ is the Szeg{\"o}  kernel of $\Omega$. We note that in \cite{ChFu11}, the authors obtained estimates on
$\left.S\right/K$ for a much larger class of domains called $\delta$-regular,
which includes pseudoconvex domains of finite type and pseudoconvex
domains having plurisubharmonic defining functions at boundary points. Our argument here can be extended to domains admitting a plurisubharmonic defining function, see Remark \ref{gener}. It seems to us that our approach and the stated result (Theorem \ref{Main-1}) have not been noticed in the literature.

In Section \ref{wei}, we provide a weighted version of Bergman projections. We recall some properties of the pluricomplex Green function in Section \ref{plugreen}. The proof of Theorem \ref{Main-1} is given in the last section.

%%%%%%%%%%%%%%%%%%%%%%%%%%%%%%%%%%%

%%%%%%%%%%%%%%%%%%%%%%%%%%%%%%%%%%%
\textbf{Notation.} 
We will use the notation $X\lesssim Y$ (resp. $X\gtrsim Y$) to denote the estimate $\left|X\right|\leq CY$ (resp. $X\geq C\left|Y\right|$),
for some positive constant $C$.  We use $X\approx Y$ for the
fact $X\lesssim Y\lesssim X$.

\textbf{Acknowledgements.} The author thanks his supervisors: Jiakun Liu and Tran Vu Khanh for helpful suggestions and encouragement. The author would like to thank Professor Bo-Yong Chen for his comments. 

\section{Weighted Bergman projections}\label{wei}

Let $\psi$ be a Lebesgue measurable function on $\Omega$. By $L^{2}\left(\Omega,e^{\psi}\right)$
we denote the Hilbert space of measurable functions associated with
the norm 
\[
\left\Vert f\right\Vert _{L^{2}\left(\Omega,e^{\psi}\right)}:=\sqrt{\intop_{\Omega}\left|f\right|^{2}e^{\psi}dV}.
\]

Let $P$ be the Bergman projection associated to $\Omega$. It can
be represented as 
\begin{equation}
P\left(f\right)\left(z\right):=\intop_{\Omega}K\left(z,w\right)f\left(w\right)dV\left(w\right),\forall f\in L^{2}\left(\Omega\right).\label{eq:deBer}
\end{equation}
The formula \eqref{eq:deBer} allows us to extend the domain of definition
of $P$. Let $f$ be a Lebesgue measurable function on $\Omega$,
we say that $P\left(f\right)$ is well-defined on $\Omega$ if for
almost every $z\in\Omega$, we have $K\left(z,\cdot\right)f\left(\cdot\right)\in L^{1}\left(\Omega\right)$. For example, when $\Omega$ is a smoothly bounded pseudoconvex domain
of finite type, $P\left(f\right)$ is well-defined for any $f\in L^{p}\left(\Omega\right)$,
with $p\geq1$ (see, e.g. \cite{Boa_k87,Bel86}).

The main purpose of this section is to establish the following result.

\begin{proposition}\label{Pro-1}
Let $\Omega$ be a bounded pseudoconvex domain. Let $0<r<1$ and let $\psi$ be a locally bounded, plurisubharmonic function on $\Omega$ such that $ri\partial\overline{\partial}\psi\geq i\partial\psi\wedge\overline{\partial}\psi$
as currents.
Then for any Lebesgue measurable function $f$ so that $P\left(f\right)$
is well-defined, the following inequality holds
\begin{equation}\label{maeq1}
\displaystyle \intop_{\Omega}\left|P\left(f\right)\left(z\right)\right|^{2}e^{\psi\left(z\right)}dV\left(z\right)\leq \dfrac{1}{1-r}\intop_{\Omega}\left|f\left(z\right)\right|^{2}e^{\psi\left(z\right)}dV\left(z\right).
\end{equation}
\end{proposition}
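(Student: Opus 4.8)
The plan is to realise $u:=P(f)$ as the unweighted Bergman projection and then to exploit a single $\overline{\partial}$-solution measured against the conjugate weight $e^{-\psi}$, so that the hypothesis $r\,i\partial\overline{\partial}\psi\geq i\partial\psi\wedge\overline{\partial}\psi$ enters exactly once, through H\"ormander's $L^2$-estimate, and produces precisely the constant $\tfrac{1}{1-r}$. After disposing of the trivial case and reducing, by an exhaustion of $\Omega$ together with a regularisation of $\psi$, to the situation in which $\psi$ is smooth and strictly plurisubharmonic and every integral below is finite, I would argue as follows.

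First I would introduce the weighted minimal solution. Since $\overline{\partial}u=0$, the $(0,1)$-form $\gamma:=\overline{\partial}(ue^{\psi})=ue^{\psi}\,\overline{\partial}\psi$ is $\overline{\partial}$-closed, and I let $\beta$ be its $L^{2}(\Omega,e^{-\psi})$-minimal solution of $\overline{\partial}\beta=\gamma$; equivalently $ue^{\psi}=h+\beta$, where $h$ is the orthogonal projection of $ue^{\psi}$ onto the holomorphic functions in $L^{2}(\Omega,e^{-\psi})$, so that $h$ is holomorphic and $\beta$ is orthogonal to $h$ in $L^{2}(\Omega,e^{-\psi})$. The curvature hypothesis is exactly the pointwise bound $|\overline{\partial}\psi|^{2}_{i\partial\overline{\partial}\psi}\leq r$ in the metric $i\partial\overline{\partial}\psi$; hence $|\gamma|^{2}_{i\partial\overline{\partial}\psi}=|u|^{2}e^{2\psi}\,|\overline{\partial}\psi|^{2}_{i\partial\overline{\partial}\psi}\leq r\,|u|^{2}e^{2\psi}$, and H\"ormander's estimate with the plurisubharmonic weight $\psi$ gives
\[
\intop_{\Omega}|\beta|^{2}e^{-\psi}\,dV\leq\intop_{\Omega}|\gamma|^{2}_{i\partial\overline{\partial}\psi}\,e^{-\psi}\,dV\leq r\intop_{\Omega}|u|^{2}e^{\psi}\,dV.
\]

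The second ingredient is an exact identity for the cross term. Because $h$ is holomorphic and $P$ reproduces holomorphic functions, $\intop_{\Omega}u\overline{h}\,dV=\intop_{\Omega}f\overline{h}\,dV$; and since $ue^{\psi}=h+\beta$ with $h\perp\beta$ in $L^{2}(\Omega,e^{-\psi})$, one computes $\intop_{\Omega}u\overline{\beta}\,dV=\intop_{\Omega}(ue^{\psi})\overline{\beta}\,e^{-\psi}\,dV=\|\beta\|^{2}_{L^{2}(\Omega,e^{-\psi})}$. Pairing $ue^{\psi}=h+\beta$ against $u$ in the unweighted inner product therefore yields
\[
\intop_{\Omega}|u|^{2}e^{\psi}\,dV=\intop_{\Omega}f\overline{h}\,dV+\intop_{\Omega}|\beta|^{2}e^{-\psi}\,dV.
\]
I would bound the first term by Cauchy--Schwarz in the split weight, $\left|\intop_{\Omega}f\overline{h}\,dV\right|\leq\|f\|_{L^{2}(\Omega,e^{\psi})}\,\|h\|_{L^{2}(\Omega,e^{-\psi})}$, and use the Pythagorean identity $\|h\|^{2}_{L^{2}(\Omega,e^{-\psi})}=\|u\|^{2}_{L^{2}(\Omega,e^{\psi})}-\|\beta\|^{2}_{L^{2}(\Omega,e^{-\psi})}$. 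Writing $a=\|u\|^{2}_{L^{2}(\Omega,e^{\psi})}$ and $t=\|\beta\|^{2}_{L^{2}(\Omega,e^{-\psi})}\leq ra$, the displayed equality becomes $a-t\leq\|f\|_{L^{2}(\Omega,e^{\psi})}\sqrt{a-t}$, whence $a-t\leq\|f\|^{2}_{L^{2}(\Omega,e^{\psi})}$, and finally $a\leq\|f\|^{2}_{L^{2}(\Omega,e^{\psi})}+ra$, which is exactly \eqref{maeq1}.

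I expect the main obstacle to be the analytic input rather than the algebra. Applying H\"ormander's estimate requires $\psi$ to be smooth and strictly plurisubharmonic, whereas here $\psi$ is only locally bounded and plurisubharmonic, and the curvature condition holds merely as currents. The technical heart is thus to regularise $\psi$ (by convolution on an exhausting sequence of smoothly bounded pseudoconvex subdomains, adding a small $\varepsilon|z|^{2}$ for strict positivity) in a way that preserves the inequality $r\,i\partial\overline{\partial}\psi\geq i\partial\psi\wedge\overline{\partial}\psi$; the key point is that for a standard mollifier $\chi_{\varepsilon}$ one has $i\partial\psi_{\varepsilon}\wedge\overline{\partial}\psi_{\varepsilon}\leq(i\partial\psi\wedge\overline{\partial}\psi)\ast\chi_{\varepsilon}$ by Jensen's inequality, so that the constant $r$ survives in the limit. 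A secondary issue is the a priori finiteness of $\|u\|_{L^{2}(\Omega,e^{\psi})}$, which is what legitimises forming the projection $h$ and running Cauchy--Schwarz; this I would dispatch by proving the estimate for the regularised, truncated data and passing to the limit by monotone convergence.
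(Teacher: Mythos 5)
Your core argument is correct and produces exactly the constant $\frac{1}{1-r}$, but it reaches it by a genuinely different route from the paper. The paper starts from Kohn's formula \eqref{eq:3}, writing $P(f)=g-u$ with $g=e^{-\psi}P_{\psi}\left(e^{\psi}f\right)$ and $u=\overline{\partial}^{\star}N\left(\overline{\partial}g\right)$: the weighted projection is applied to the \emph{data} $e^{\psi}f$, the term $\int_{\Omega}\left|g\right|^{2}e^{\psi}$ is controlled by the contraction property of $P_{\psi}$, and the correction term is controlled by the dual H\"ormander estimate \eqref{eq:peq6}; this requires invoking the $\overline{\partial}$-Neumann operator $N$. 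You instead decompose the \emph{output}: $P(f)e^{\psi}=h+\beta$ with $h=P_{\psi}\left(P(f)e^{\psi}\right)$ and $\beta$ the minimal solution of $\overline{\partial}\beta=\overline{\partial}\left(P(f)e^{\psi}\right)$, and you let $f$ enter through the self-adjointness of $P$, via $\int_{\Omega}P(f)\overline{h}=\int_{\Omega}f\overline{h}$, closing with Cauchy--Schwarz, the Pythagorean identity, and the quadratic manipulation $a-t\leq\left\Vert f\right\Vert _{L^{2}\left(\Omega,e^{\psi}\right)}\sqrt{a-t}$, $t\leq ra$. This avoids Kohn's formula and the Neumann operator entirely, using only Hilbert-space projections plus the same H\"ormander input with the same curvature hypothesis $\left|\overline{\partial}\psi\right|_{i\partial\overline{\partial}\psi}^{2}\leq r$. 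In fact the two decompositions produce the same objects: in the regularized setting $e^{\psi}\left(f-P(f)\right)$ is orthogonal to $A^{2}\left(\Omega\right)$ in $L^{2}\left(\Omega,e^{-\psi}\right)$, so $h=P_{\psi}\left(e^{\psi}f\right)=ge^{\psi}$ and $\beta=-ue^{\psi}$; your proof is thus a self-contained, somewhat more elementary re-derivation of the Berndtsson--Charpentier mechanism, and every algebraic step (the cross-term identity, Pythagoras, the final inequality) checks out once all quantities are finite.

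The one place where your write-up falls short of a complete proof is the reduction, which you only sketch. Your Jensen-inequality observation that mollification preserves $ri\partial\overline{\partial}\psi\geq i\partial\psi\wedge\overline{\partial}\psi$ is correct (and is exactly what the paper uses, without comment, for $\psi_{\varepsilon_{j}}$), and adding $\varepsilon\left|z\right|^{2}$ only perturbs $r$ to $\left(\sqrt{r}+O(\sqrt{\varepsilon})\right)^{2}$, which is harmless in the limit. But ``monotone convergence'' is not enough to pass from the exhausting subdomains $\Omega_{j}$ back to $\Omega$: one needs (i) a weak-compactness argument together with the extremal (minimizing) characterization of the Bergman projection to identify the weak limit of $P_{\Omega_{j}}(f)$ with $P_{\Omega}(f)$, and (ii) for $f$ merely measurable with $P(f)$ well-defined and $\int_{\Omega}\left|f\right|^{2}e^{\psi}<\infty$, a truncation $f_{k}=\chi_{\Omega_{k}}f$ plus the pointwise convergence $P\left(f_{k}\right)\rightarrow P\left(f\right)$, which uses precisely the hypothesis $K\left(z,\cdot\right)f\left(\cdot\right)\in L^{1}\left(\Omega\right)$. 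These limiting arguments, carried out in the paper's proof, are standard but not automatic, and they are also what legitimizes the a priori finiteness of $\int_{\Omega}\left|P(f)\right|^{2}e^{\psi}$ that your algebra requires; a complete version of your proof should include them.
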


\begin{remark}
The inequality \eqref{maeq1} was already stated in \cite{BeCh00} with an implicit constant in the right-hand side. It turns out that the constant $\left.1\right/\left(1-r\right)$ here allows us to establish the estimates in Theorem \ref{Main-1}. 

We also recall that the condition $r\imath\partial\overline{\partial}\psi\geq\imath\partial\psi\wedge\overline{\partial}\psi$ is equivalent to the statement that $-e^{-\left.\psi\right/r}$ is plurisubharmonic on $\Omega$. 
\end{remark}

\begin{proof} It suffices to consider the case that the right hand side of \eqref{maeq1} is finite. We will employ a similar approach as in \cite{BeCh00}. The idea is to shift the standard $L^{2}$ space to weighted one
by Kohn's formula and by being more careful with the use of weighted
Bergman projections.

We first assume that $\psi\in C^{2}\left(\overline{\Omega}\right)$ and $f\in L^{2}\left(\Omega\right)$. By Kohn's formula (see e.g. \cite{Koh63,ChSh01})
\begin{equation}
P\left(f\right)=e^{-\psi}P_{\psi}\left(e^{\psi}f\right)-\overline{\partial}^{\star}N\left(\overline{\partial}\left(e^{-\psi}P_{\psi}\left(e^{\psi}f\right)\right)\right),\label{eq:3}
\end{equation}
where $P_{\psi}$ denotes the Bergman projection in $L^{2}\left(\Omega,e^{-\psi}\right)=L^{2}\left(\Omega\right)$.
Set $g:=e^{-\psi}P_{\psi}\left(e^{\psi}f\right)$ and $u:=\overline{\partial}^{\star}N\left(\overline{\partial}\left(e^{-\psi}P_{\psi}\left(e^{\psi}f\right)\right)\right)$. Since $P_{\psi}\left(e^{\psi}f\right)\in A^{2}\left(\Omega\right)$, 
we have $\intop_{\Omega}u\overline{g}e^{\psi}=0$. Thus
\begin{equation}
\intop_{\Omega}\left|P\left(f\right)\right|^{2}e^{\psi}=\intop_{\Omega}\left|g-u\right|^{2}e^{\psi}=\intop_{\Omega}\left|g\right|^{2}e^{\psi}+\intop_{\Omega}\left|u\right|^{2}e^{\psi}.\label{eq:peq1}
\end{equation}
For the first term of \eqref{eq:peq1}, we get 
\begin{equation}
\intop_{\Omega}\left|g\right|^{2}e^{\psi}=\intop_{\Omega}\left|P_{\psi}\left(e^{\psi}f\right)\right|^{2}e^{-\psi}\leq\intop_{\Omega}\left|e^{\psi}f\right|^{2}e^{-\psi}=\intop_{\Omega}\left|f\right|^{2}e^{\psi}.\label{eq:1}
\end{equation}
Note that $u=\overline{\partial}^{\star}N\left(-g\wedge\overline{\partial}\psi\right)$, and
$ue^{\psi}$ belongs to the orthogonal complement of $\ker\overline{\partial}$
in $L^{2}\left(\Omega,e^{-\psi}\right)$. Thus we obtain
\begin{equation}\label{eq:peq6}
\intop_{\Omega}\left|ue^{\psi}\right|^{2}e^{-\psi}\leq\intop_{\Omega}\left|\overline{\partial}\left(ue^{\psi}\right)\right|_{\imath\partial\overline{\partial}\psi}^{2}e^{-\psi}.
\end{equation}
It continues 
\begin{eqnarray}
\intop_{\Omega}\left|u\right|^{2}e^{\psi} & \leq & \intop_{\Omega}\left|\overline{\partial}u+\overline{\partial}\psi\wedge u\right|_{\imath\partial\overline{\partial}\psi}^{2}e^{\psi}\nonumber\\
 & = & \intop_{\Omega}\left|-g\wedge\overline{\partial}\psi+\overline{\partial}\psi\wedge u\right|_{\imath\partial\overline{\partial}\psi}^{2}e^{\psi}\label{eq:peq4} \\
 & \leq & r\intop_{\Omega}\left(\left|u\right|^{2}+\left|g\right|^{2}\right)e^{\psi},\nonumber 
\end{eqnarray}
here the last inequality follows by $\intop_{\Omega}u\overline{g}e^{\psi}=0$
and $\left|\overline{\partial}\psi\right|^{2}_{\imath\partial\overline{\partial}\psi}\leq r$. Therefore 
\begin{equation}
\intop_{\Omega}\left|u\right|^{2}e^{\psi}\leq\dfrac{r}{1-r}\intop_{\Omega}\left|g\right|^{2}e^{\psi}.\label{eq:2}
\end{equation}
From \eqref{eq:peq1}, \eqref{eq:1} and \eqref{eq:2}, we get the estimate \eqref{maeq1}.

When $f\in L^{2}\left(\Omega\right)$, but $\psi$ is just a locally bounded, plurisubharmonic on $\Omega$. Consider a sequence
of pseudoconvex domains $\left\{ \Omega_{j}\right\} $ such that $\overline{\Omega}_{j}\Subset\Omega_{j+1}$ 
and $\Omega=\bigcup_{j=1}^{\infty}\Omega_{j}$. For $\varepsilon>0$, denote the convolution $\psi_{\varepsilon}:=\psi\star\eta_{\varepsilon}$
the standard regularization. For each $j$, we can choose $\varepsilon_{j}$
such that $0<\varepsilon_{j}<\text{dist}\left(\Omega_{j},\partial\Omega\right)$
and 
\begin{equation}
\intop_{\Omega_{j}}\left|f\right|^{2}\left(e^{\psi_{\varepsilon_{j}}}-e^{\psi}\right)\leq\dfrac{1}{j}.\label{eq:pne1}
\end{equation}
This is due to the monotone convergence theorem and the fact $f\in L^{2}\left(\Omega\right)\cap L^{2}\left(\Omega,e^{\psi}\right)$.
Since $ri\partial\overline{\partial}\psi_{\varepsilon_{j}}\geq i\partial\psi_{\varepsilon_{j}}\wedge\overline{\partial}\psi_{\varepsilon_{j}}$,
by applying the previous argument, we get that
\[
\intop_{\Omega_{j}}\left|P_{\Omega_{j}}\left(f\right)\right|^{2}e^{\psi_{\varepsilon_{j}}}\leq\dfrac{1}{1-r}\intop_{\Omega_{j}}\left|f\right|^{2}e^{\psi_{\varepsilon_{j}}}.
\]
By $\psi_{\varepsilon_{j}}\geq\psi$ on $\Omega_{j}$ and \eqref{eq:pne1},
it continues 
\begin{equation}
\intop_{\Omega_{j}}\left|P_{\Omega_{j}}\left(f\right)\right|^{2}e^{\psi}\leq\dfrac{1}{1-r}\intop_{\Omega_{j}}\left|f\right|^{2}e^{\psi_{\varepsilon_{j}}}\leq\dfrac{1}{1-r}\left(\dfrac{1}{j}+\intop_{\Omega}\left|f\right|^{2}e^{\psi}\right).\label{eq:pne2}
\end{equation}
We conclude that for each fixed $k\in\mathbb{Z}^{+}$, the sequence $\left\{ P_{\Omega_{j}}\left(f\right)\right\} _{j=k}^{\infty}$
is uniformly bounded in $L^{2}\left(\Omega_{k}\right)$. By Cantor's
diagonal argument, we can assume, by passing to a subsequence, that $ P_{\Omega_{j}}\left(f\right)$ converges
weakly to a function $v$ in $L^{2}\left(\Omega,\text{loc}\right)$.
It is clear that $P_{\Omega_{j}}\left(f\right)e^{\left.\psi\right/2}$
also converges weakly to $ve^{\left.\psi\right/2}$ in $L^{2}\left(\Omega,\text{loc}\right)$.
Thus for each $K\Subset\Omega$, by \eqref{eq:pne2}
\[
\intop_{K}\left|v\right|^{2}e^{\psi}\leq\liminf_{j\rightarrow\infty}\intop_{K}\left|P_{\Omega_{j}}\left(f\right)\right|^{2}e^{\psi}\leq\dfrac{1}{1-r}\intop_{\Omega}\left|f\right|^{2}e^{\psi}.
\]
It follows that 
\[
\intop_{\Omega}\left|v\right|^{2}e^{\psi}\leq\dfrac{1}{1-r}\intop_{\Omega}\left|f\right|^{2}e^{\psi}.
\]
We now prove that $v=P_{\Omega}\left(f\right)$. First, since $\overline{\partial}\left(P_{\Omega_{j}}\left(f\right)\right)=0$
and $P_{\Omega_{j}}\left(f\right)\rightarrow v$ weakly, $v$ is holomorphic
in $\Omega$. It remains to show that 
\begin{equation}
\intop_{\Omega}\left|f-v\right|^{2}\leq\intop_{\Omega}\left|f-h\right|^{2},\forall h\in A^{2}\left(\Omega\right).\label{eq:peq2}
\end{equation}
To see this, fix any $K\Subset\Omega$, we have 
\begin{eqnarray*}
\intop_{K}\left|f-v\right|^{2} & \leq & \liminf_{j\rightarrow\infty}\intop_{K}\left|f-P_{\Omega_{j}}\left(f\right)\right|^{2}\\
 & \leq & \liminf_{j\rightarrow\infty}\intop_{\Omega_{j}}\left|f-P_{\Omega_{j}}\left(f\right)\right|^{2}\\
 & \leq & \liminf_{j\rightarrow\infty}\intop_{\Omega_{j}}\left|f-h\right|^{2}\\
 & \leq & \intop_{\Omega}\left|f-h\right|^{2}.
\end{eqnarray*}
So \eqref{eq:peq2} follows.

Finally, we consider the case when we only require that $\intop_{\Omega}\left|f\right|^{2}e^{\psi}$
is finite and $P\left(f\right)$ is well-defined. Set $f_{k}:=\chi_{\Omega_{k}}f$, where the sequence $\left\{ \Omega_{k}\right\} $
is the same as above, and $\chi_{\Omega_{k}}$ is the indicator function of $\Omega_{k}$. We have $f_{k}\in L^{2}\left(\Omega\right)\cap L^{2}\left(\Omega,e^{\psi}\right)$
and 
\[
\intop_{\Omega}\left|f_{k}-f\right|^{2}e^{\psi}=\intop_{\left.\Omega\right\backslash \Omega_{k}}\left|f\right|^{2}e^{\psi}\rightarrow0\;\text{as }k\rightarrow\infty.
\]
By the previous estimate, 
\[
\intop_{\Omega}\left|P\left(f_{k}\right)\right|^{2}e^{\psi}\leq\dfrac{1}{1-r}\intop_{\Omega}\left|f_{k}\right|^{2}e^{\psi}.
\]
It follows that $\left\{ P\left(f_{k}\right)\right\} $ is a Cauchy
sequence in $L^{2}\left(\Omega,e^{\psi}\right)$ and so converges to
a function $v$ in $L^{2}\left(\Omega,e^{\psi}\right)$. Thus 
\[
\intop_{\Omega}\left|v\right|^{2}e^{\psi}\leq\dfrac{1}{1-r}\intop_{\Omega}\left|f\right|^{2}e^{\psi}.
\]
Now, we only need to prove that $v=P\left(f\right)$. Since we can
choose a subsequence of the $\left\{ P\left(f_{k}\right)\right\} $
that converges pointwise to $v$, it suffices to show that $P\left(f_{k}\right)$
converges pointwise to $P\left(f\right)$. For each $z\in\Omega$,
\begin{eqnarray*}
\left|P\left(f_{k}\right)\left(z\right)-P\left(f\right)\left(z\right)\right| & = & \left|\intop_{\Omega}K\left(z,w\right)f_{k}\left(w\right)dw-\intop_{\Omega}K\left(z,w\right)f\left(w\right)dw\right|\\
 & = & \left|\intop_{\left.\Omega\right\backslash \Omega_{k}}K\left(z,w\right)f\left(w\right)dw\right|.
\end{eqnarray*}
The last integral goes to zero as $k\rightarrow\infty$ since $K\left(z,\cdot\right)f\left(\cdot\right)\in L^{1}\left(\Omega\right)$.

\end{proof}

\begin{remark}
The constant $\left.1\right/\left(1-r\right)$ is not sharp for any $r\in\left(0,1\right)$. More specifically, it is not sharp for $r\approx0$. 
To see this, by a result of B{\l}ocki (\cite[p. 89]{Blo04}),  for any function $v\perp\ker\overline{\partial}$ in $L^{2}\left(\Omega,e^{-\psi}\right)$, we have
\begin{equation}
\intop_{\Omega}\left|v\right|^{2}e^{-\psi}\leq\dfrac{4r}{\left(1-r\right)^{2}}\intop_{\Omega}\left|\overline{\partial}v\right|_{i\partial\overline{\partial}\psi}^{2}e^{-\psi}.\label{eq:peq7}
\end{equation}
Now, apply \eqref{eq:peq7} with $v:=\overline{\partial}^{\star}N\left(-g\wedge\overline{\partial}\psi\right)e^{\psi}$,
then we can replace the inequality \eqref{eq:peq6} by 
\[
\intop_{\Omega}\left|ue^{\psi}\right|^{2}e^{-\psi}\leq\dfrac{4r}{\left(1-r\right)^{2}}\intop_{\Omega}\left|\overline{\partial}\left(ue^{\psi}\right)\right|_{i\partial\overline{\partial}\psi}^{2}e^{-\psi}.
\]
Continue the argument there, we get that 
\[
\intop_{\Omega}\left|P\left(f\right)\right|^{2}e^{\psi}\leq\dfrac{1}{1-\frac{4r^{2}}{\left(1-r\right)^{2}}}\intop_{\Omega}\left|f\right|^{2}e^{\psi},
\]
provided $0<r<\frac{1}{3}$. Note that the constant $\left.1\right/\left(1-\left.4r^{2}\right/\left(1-r\right)^{2}\right)$
is sharper than $\left.1\right/\left(1-r\right)$.

Nevertheless, if we call $C\left(r\right)$ the sharp constant for
the estimate \eqref{maeq1}, that is, given $0<r<1$, $C\left(r\right)$ is the
least constant such that for any pseudoconvex domain $\Omega$ and
$ri\partial\overline{\partial}\psi\geq i\partial\psi\wedge\overline{\partial}\psi$,
we have
\[
\intop_{\Omega}\left|P\left(f\right)\right|^{2}e^{\psi}\leq C\left(r\right)\intop_{\Omega}\left|f\right|^{2}e^{\psi},
\]
then we can show that 
\begin{equation}
\lim_{r\rightarrow1}\dfrac{C\left(r\right)}{\frac{1}{1-r}}=1.\label{eq:pegg1}
\end{equation}
Therefore, the constant $\left.1\right/\left(1-r\right)$ is sharp
in the use of $r\rightarrow1$. 
To see \eqref{eq:pegg1}, choose $\Omega=\mathbb{D}$ the unit disc
in $\mathbb{C},$ $f=\left(-\log\left|z\right|\right)^{r}$ and $\psi=-r\log\left(-\log\left|z\right|\right)$.
We can easily check that 
\[
\dfrac{\intop_{\Omega}\left|P\left(f\right)\right|^{2}e^{\psi}}{\intop_{\Omega}\left|f\right|^{2}e^{\psi}}=\dfrac{\pi r}{\sin\left(\pi r\right)}.
\]
Thus 
\[
\dfrac{\pi r}{\sin\left(\pi r\right)}\leq C\left(r\right)\leq\dfrac{1}{1-r},
\]
and \eqref{eq:pegg1} follows.
\end{remark}

\begin{remark}
By a duality argument, under the same hypothesis as in Proposition
\ref{Pro-1}, we also have 
\[
\intop_{\Omega}\left|P\left(f\right)\left(z\right)\right|^{2}e^{-\psi\left(z\right)}dV\left(z\right)\leq\dfrac{1}{1-r}\intop_{\Omega}\left|f\left(z\right)\right|^{2}e^{-\psi\left(z\right)}dV\left(z\right).
\]
\end{remark}

Proposition \ref{Pro-1}, together with an idea of Chen (\cite{Che17}),
gives the following result.

\begin{corollary}\label{colLp}
Let $\Omega$ be a bounded pseudoconvex domain with a positive Diederich-Fornaess
index $\eta$. Then for any $0<t<\eta$ and $1\leq q<\left.4n\right/\left(2n-t\right)$, the Bergman projection $P$ associated to $\Omega$ is bounded
from $L^{2}\left(\Omega,\delta^{-t}\right)$ to $L^{q}\left(\Omega\right)$.
\end{corollary}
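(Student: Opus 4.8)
The plan is to combine the weighted $L^2$ estimate of Proposition \ref{Pro-1} with the plurisubharmonic function furnished by the Diederich--Fornaess index, and then to upgrade the resulting weighted $L^2$ bound to an $L^q$ bound by means of the sub-mean-value property of holomorphic functions. Throughout I would fix a defining function $\rho$ of $\Omega$ with $-\rho\approx\delta$, and note that since $t>0$ and $\Omega$ is bounded one has $\delta^{-t}\gtrsim 1$, so that $L^2(\Omega,\delta^{-t})\subset L^2(\Omega)$ and $P(f)$ is the ordinary Bergman projection of an $L^2$ function for every $f\in L^2(\Omega,\delta^{-t})$.

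First I would prove the weighted $L^2$ boundedness $\int_\Omega|P(f)|^2\delta^{-t}\lesssim\int_\Omega|f|^2\delta^{-t}$. Since $t<\eta$, choose $\eta_0$ with $t<\eta_0<\eta$; by definition of the Diederich--Fornaess index the function $-(-\rho)^{\eta_0}$ is plurisubharmonic on $\Omega$. Put $r:=t/\eta_0\in(0,1)$ and $\psi:=-t\log(-\rho)$. Then $-e^{-\psi/r}=-(-\rho)^{t/r}=-(-\rho)^{\eta_0}$ is plurisubharmonic, which by the remark following Proposition \ref{Pro-1} is exactly the hypothesis $r\,i\partial\overline{\partial}\psi\geq i\partial\psi\wedge\overline{\partial}\psi$; moreover $\psi$ is locally bounded on $\Omega$ and $e^{\psi}=(-\rho)^{-t}\approx\delta^{-t}$. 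Applying Proposition \ref{Pro-1} to this $\psi$ and transferring the weight through $e^\psi\approx\delta^{-t}$ on both sides yields $\int_\Omega|P(f)|^2\delta^{-t}\leq C\int_\Omega|f|^2\delta^{-t}$, with $C$ depending on $\eta_0,t$ and the comparison constant in $-\rho\approx\delta$.

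Next I would convert this into a pointwise bound. For $u:=P(f)$, holomorphic on $\Omega$, the sub-mean-value inequality over the ball $B(z,\delta(z)/2)\Subset\Omega$ together with $\delta(w)\approx\delta(z)$ on this ball gives
\[
|u(z)|^2\lesssim\delta(z)^{-2n}\int_{B(z,\delta(z)/2)}|u|^2\,dV\lesssim\delta(z)^{t-2n}\int_\Omega|u|^2\delta^{-t}\,dV,
\]
so that $|u(z)|\lesssim\delta(z)^{(t-2n)/2}\,\|u\|_{L^2(\Omega,\delta^{-t})}$. To reach $L^q$ for $2<q<4n/(2n-t)$, I would write $|u|^q=|u|^2|u|^{q-2}$, insert the pointwise bound into the factor $|u|^{q-2}$, and observe that $(t-2n)(q-2)/2\geq -t$ precisely when $q\leq 4n/(2n-t)$; since $\delta$ is bounded this yields $\delta^{(t-2n)(q-2)/2}\lesssim\delta^{-t}$, whence $\int_\Omega|u|^q\lesssim\|u\|_{L^2(\Omega,\delta^{-t})}^{q}\lesssim\|f\|_{L^2(\Omega,\delta^{-t})}^q$. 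The remaining range $1\leq q\leq 2$ is immediate, since on the bounded domain $\Omega$ one has $\|u\|_{L^q(\Omega)}\lesssim\|u\|_{L^2(\Omega)}\lesssim\|u\|_{L^2(\Omega,\delta^{-t})}$. Combining the two ranges gives the asserted boundedness.

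The routine parts are the sub-mean-value estimate and the H\"older-type interpolation. The step that requires the most care---and where the exponent $4n/(2n-t)$ is actually produced---is the bookkeeping in the first two paragraphs: choosing $\eta_0$ and $r=t/\eta_0$ so that the single weight $\psi=-t\log(-\rho)$ simultaneously matches the target weight $\delta^{-t}$ and satisfies the curvature hypothesis of Proposition \ref{Pro-1}, and then verifying that the comparison $-\rho\approx\delta$ survives under the $t$-th power. I expect the main obstacle to be precisely this comparison/regularity issue: ensuring the Diederich--Fornaess defining function is comparable to $\delta$ so that the weighted estimate transfers between $(-\rho)^{-t}$ and $\delta^{-t}$, and that $\psi$ is genuinely locally bounded and plurisubharmonic despite the boundary behavior where $-\rho\to0$.
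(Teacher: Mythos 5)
Your proof is correct, and while its first half coincides with the paper's argument, its second half takes a genuinely different (and in fact slightly sharper) route. The weighted estimate $\int_\Omega|P(f)|^2\delta^{-t}\lesssim\int_\Omega|f|^2\delta^{-t}$ is obtained in the paper exactly as you do: it picks $t'\in(t,\eta)$ and $h\in PSH\left(\Omega\right)$ with $C^{-1}\delta^{t'}<-h<C\delta^{t'}$, sets $\psi:=-\left(t/t'\right)\log\left(-h\right)$, $r:=t/t'$, and applies Proposition \ref{Pro-1}; your $\psi=-t\log\left(-\rho\right)$ is this very function after the substitution $-\rho:=\left(-h\right)^{1/\eta_0}$. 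One definitional wrinkle: the paper's Diederich--Fornaess index supplies a plurisubharmonic $h$ comparable to $-\delta^{\eta_0}$, not a defining function $\rho$ with $-\left(-\rho\right)^{\eta_0}$ plurisubharmonic, but the substitution just given fixes this, nothing in your computation needs $\rho$ to be smooth or an honest defining function, and the comparability $-\rho\approx\delta$ you flag as the main obstacle is then built into the definition; also, plurisubharmonicity of $\psi$ itself (required by Proposition \ref{Pro-1}) follows from that of $-e^{-\psi/r}$ by composing with the convex increasing map $x\mapsto-r\log\left(-x\right)$. Where you genuinely diverge is the upgrade from weighted $L^2$ to $L^q$: the paper decomposes $\Omega$ into dyadic shells $\left\{2^{-k-1}<\delta\leq2^{-k}\right\}$, bounds each shell's $L^2$ mass by the decay $\int_{\delta\leq\varepsilon}|P(f)|^2\lesssim\varepsilon^{t}\|f\|_{L^2\left(\Omega,\delta^{-t}\right)}^2$, and sums a geometric series, which converges precisely when $q<4n/\left(2n-t\right)$; you instead factor $|u|^q=|u|^2|u|^{q-2}$, insert the pointwise bound $|u(z)|\lesssim\delta(z)^{\left(t-2n\right)/2}\|u\|_{L^2\left(\Omega,\delta^{-t}\right)}$ only into the factor $|u|^{q-2}$, and absorb $\delta^{\left(t-2n\right)\left(q-2\right)/2}\lesssim\delta^{-t}$ into the weight. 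Your way of combining the same two ingredients retains the disjointness of the level sets of $\delta$ (the paper's shell estimate discards it by majorizing every shell's $L^2$ mass by the global weighted norm), and consequently your argument needs no dyadic bookkeeping and even covers the endpoint $q=4n/\left(2n-t\right)$, where the paper's geometric series diverges: you prove the stated range and a little more.
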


\begin{proof}
Since $\left.4n\right/\left(2n-t\right)>2$, it suffices to assume that $q>2$. Recall that the Diederich-Fornaess index of $\Omega$ is defined by
\[
\eta\left(\Omega\right):=\sup\left\{ \alpha\in\left[0,1\right]:\exists h\in PSH\left(\Omega\right)\text{ and }C>0\text{ such that }\dfrac{1}{C}\delta^{\alpha}<-h<C\delta^{\alpha}\right\} .
\]
Thus, we can choose $t'\in\left(t,\eta\right)$ and $h\in PSH\left(\Omega\right)$
such that $\frac{1}{C}\delta^{t'}<-h<C\delta^{t'}$, for some positive
constant $C$. Set $\psi:=-\left(\left.t\right/t'\right)\log\left(-h\right)$, 
then $\psi\in L_{\text{loc}}^{\infty}\left(\Omega\right)$ and 
\[
\frac{t}{t'}i\partial\overline{\partial}\psi\geq i\partial\psi\wedge\overline{\partial}\psi.
\]
For any measurable function $f$ such that $\intop_{\Omega}\left|f\right|^{2}\delta^{-t}$
is finite (which also implies $f\in L^{2}\left(\Omega\right)$), by
applying Proposition \ref{Pro-1}, we get that
\begin{equation}
\intop_{\Omega}\left|P\left(f\right)\right|^{2}\delta^{-t}\lesssim\intop_{\Omega}\left|P\left(f\right)\right|^{2}e^{\psi}\lesssim\intop_{\Omega}\left|f\right|^{2}e^{\psi}\lesssim\intop_{\Omega}\left|f\right|^{2}\delta^{-t}.\label{eq:cpq1}
\end{equation}
Thus, for any $\varepsilon>0,$ 
\begin{eqnarray*}
\intop_{\delta\leq\varepsilon}\left|P\left(f\right)\right|^{2} & \leq & \varepsilon^{t}\intop_{\delta\leq\varepsilon}\left|P\left(f\right)\right|^{2}\delta^{-t}\leq\varepsilon^{t}\intop_{\Omega}\left|P\left(f\right)\right|^{2}\delta^{-t}\\
 & \lesssim & \varepsilon^{t}\intop_{\Omega}\left|f\right|^{2}\delta^{-t}.
\end{eqnarray*}
Moreover, by the mean value inequality, 
\begin{eqnarray*}
\left|P\left(f\right)\left(z\right)\right|^{2} & \lesssim & \delta^{-2n}\left(z\right)\intop_{B\left(z,\delta\left(z\right)\right)}\left|P\left(f\right)\right|^{2}\\
 & \lesssim & \delta^{-2n}\left(z\right)\intop_{\delta\leq2\delta\left(z\right)}\left|P\left(f\right)\right|^{2}\\
 & \lesssim & \delta^{-2n+t}\left(z\right)\intop_{\Omega}\left|f\right|^{2}\delta^{-t}.
\end{eqnarray*}
It follows that for each $k\in\mathbb{Z}^{+},$ 
\begin{eqnarray*}
\intop_{2^{-k-1}<\delta\leq2^{-k}}\left|P\left(f\right)\right|^{q} & \lesssim & 2^{-\left(k+1\right)\left(q-2\right)\left(-n+\frac{t}{2}\right)}\left(\intop_{\Omega}\left|f\right|^{2}\delta^{-t}\right)^{\frac{q}{2}-1}\intop_{\delta\leq2^{-k}}\left|P\left(f\right)\right|^{2}\\
 & \lesssim & 2^{-k\left(t-\left(q-2\right)\left(n-\frac{t}{2}\right)\right)}\left(\intop_{\Omega}\left|f\right|^{2}\delta^{-t}\right)^{\frac{q}{2}}.
\end{eqnarray*}
Similarly, 
\begin{eqnarray*}
\intop_{\delta>\left.1\right/2}\left|P\left(f\right)\right|^{q} & \lesssim & \left(\frac{1}{2}\right)^{\left(q-2\right)\left(-n+\frac{t}{2}\right)}\left(\intop_{\Omega}\left|f\right|^{2}\delta^{-t}\right)^{\frac{q}{2}-1}\intop_{\Omega}\left|P\left(f\right)\right|^{2}\\
 & \lesssim & \left(\intop_{\Omega}\left|f\right|^{2}\delta^{-t}\right)^{\frac{q}{2}-1}\intop_{\Omega}\left|f\right|^{2}\\
 & \lesssim & \left(\intop_{\Omega}\left|f\right|^{2}\delta^{-t}\right)^{\frac{q}{2}}.
\end{eqnarray*}
Therefore
\begin{eqnarray*}
\intop_{\Omega}\left|P\left(f\right)\right|^{q} & \leq & \intop_{\delta>\left.1\right/2}\left|P\left(f\right)\right|^{q}+\sum_{k=1}^{\infty}\intop_{2^{-k-1}<\delta\leq2^{-k}}\left|P\left(f\right)\right|^{q}\\
 & \lesssim & \left(1+\sum_{k=1}^{\infty}2^{-k\left(t-\left(q-2\right)\left(n-\frac{t}{2}\right)\right)}\right)\left(\intop_{\Omega}\left|f\right|^{2}\delta^{-t}\right)^{\frac{q}{2}}\\
 & \lesssim & \left(\intop_{\Omega}\left|f\right|^{2}\delta^{-t}\right)^{\frac{q}{2}},
\end{eqnarray*}
where the last inequality follows by the hypothesis $q<\left.4n\right/\left(2n-t\right)$.
\end{proof}

Note that we do not impose any regularity assumption on the boundary
of $\Omega$ in Corollary \ref{colLp}. In the case when $\partial\Omega$
is Lipschitz, it is known that $\eta\left(\Omega\right)$ is positive,
see \cite{Har08}. Moreover, using the Lipschitz property, we can conclude
that $\delta^{-\alpha}\in L^{1}\left(\Omega\right)$ for any $\alpha<1$
(see e.g. \cite{Gri11}).
Thus, using H{\"o}lder's inequality, we get that if $p\in\left(2,\infty\right)$
and $\left.tp\right/\left(p-2\right)<1$ then 
\[
\intop_{\Omega}\left|f\right|^{2}\delta^{-t}\leq\left(\intop_{\Omega}\left|f\right|^{p}\right)^{\frac{2}{p}}\left(\intop_{\Omega}\delta^{\frac{-tp}{p-2}}\right)^{\frac{p-2}{p}}\lesssim\left(\intop_{\Omega}\left|f\right|^{p}\right)^{\frac{2}{p}}.
\]
Combining this with Corollary \ref{colLp}, we conclude that for a
given $q\in\left[2,\left.4n\right/\left(2n-\eta\right)\right)$, if
there exists $t$ such that 
\[
\eta>t>2n-\dfrac{4n}{q}\text{ and }1-\dfrac{2}{p}>t,
\]
then the Bergman projection $P$ maps from $L^{p}\left(\Omega\right)$
to $L^{q}\left(\Omega\right)$ continuously. This requirement on $t$
is equivalent to $p>\left.2q\right/\left(q+2n\left(2-q\right)\right)$. 

We therefore arrive at the following result:

\begin{corollary}\label{colLp1}
Let $\Omega$ be a bounded pseudoconvex domain with Lipschitz boundary.
Let $\eta$ be the Diederich-Fornaess index of $\Omega$. Then for
any $q\in\left[2,\left.4n\right/\left(2n-\eta\right)\right)$, the
Bergman projection associated to $\Omega$ is bounded from $L^{p}\left(\Omega\right)$
to $L^{q}\left(\Omega\right)$, provided that $p>\left.2q\right/\left(q+2n\left(2-q\right)\right)$.
\end{corollary}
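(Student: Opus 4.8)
The plan is to realize the map $P\colon L^p(\Omega)\to L^q(\Omega)$ as a composition $L^p(\Omega)\hookrightarrow L^2(\Omega,\delta^{-t})\to L^q(\Omega)$ for a well-chosen exponent $t$, where the second arrow is the Bergman projection furnished by Corollary \ref{colLp} and the first is a continuous embedding obtained from H\"older's inequality together with the integrability of a negative power of $\delta$. Two consequences of the Lipschitz hypothesis on $\partial\Omega$ make this feasible: by \cite{Har08} the Diederich--Fornaess index $\eta$ is strictly positive, so Corollary \ref{colLp} has content, and by \cite{Gri11} one has $\delta^{-\alpha}\in L^1(\Omega)$ for every $\alpha<1$.

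First I would fix a parameter $t\in(0,\eta)$, to be constrained further, and invoke Corollary \ref{colLp}: it yields a bounded map $P\colon L^2(\Omega,\delta^{-t})\to L^q(\Omega)$ precisely when $q<4n/(2n-t)$, equivalently $t>2n-4n/q$. Next I would produce the embedding. For $f\in L^p(\Omega)$ with $p>2$, H\"older's inequality with the conjugate exponents $p/2$ and $p/(p-2)$ gives
\[
\intop_\Omega|f|^2\,\delta^{-t}\le\Big(\intop_\Omega|f|^p\Big)^{2/p}\Big(\intop_\Omega\delta^{-tp/(p-2)}\Big)^{(p-2)/p},
\]
and the second factor is finite by the Grisvard estimate as soon as $tp/(p-2)<1$, i.e.\ $t<1-2/p$. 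Hence $\|f\|_{L^2(\Omega,\delta^{-t})}\lesssim\|f\|_{L^p(\Omega)}$, and composing the two maps yields the asserted boundedness of $P$.

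It then remains to check that the constraints on $t$ can be met simultaneously and that their feasibility is exactly the stated hypothesis on $p$. The admissible region is $t\in\big(\max\{0,\,2n-4n/q\},\ \min\{\eta,\,1-2/p\}\big)$. The assumption $q<4n/(2n-\eta)$ is literally $2n-4n/q<\eta$, so the lower endpoint never reaches $\eta$; the only nontrivial requirement is therefore $2n-4n/q<1-2/p$. Clearing denominators over $q$ turns this into $2/p<(q+2n(2-q))/q$, that is $p>2q/(q+2n(2-q))$, the denominator $q+2n(2-q)=4n-q(2n-1)$ being positive throughout the range $q<4n/(2n-\eta)\le 4n/(2n-1)$. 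This is precisely the hypothesis, and since it forces $p>2$ as well, a valid $t$ indeed exists.

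The step demanding the most care is this final bookkeeping: one must confirm that the inequalities $t>0$, $t<\eta$, $t>2n-4n/q$, and $t<1-2/p$ admit a common solution, and that the single binding compatibility collapses exactly to $p>2q/(q+2n(2-q))$. The analytic ingredients---Corollary \ref{colLp}, the H\"older embedding, and the two Lipschitz-boundary facts---then combine in a routine fashion.
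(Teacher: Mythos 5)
Your proposal is correct and follows essentially the same route as the paper: the factorization $L^{p}\left(\Omega\right)\hookrightarrow L^{2}\left(\Omega,\delta^{-t}\right)\rightarrow L^{q}\left(\Omega\right)$ via H{\"o}lder's inequality (using Grisvard's integrability of $\delta^{-\alpha}$, $\alpha<1$, and Harrington's positivity of $\eta$) composed with Corollary \ref{colLp}, followed by checking that the constraints $\eta>t>2n-4n/q$ and $t<1-2/p$ are simultaneously satisfiable exactly when $p>2q/\left(q+2n\left(2-q\right)\right)$. Your final bookkeeping, including the positivity of the denominator $4n-q\left(2n-1\right)$ on the stated range of $q$, is in fact spelled out more carefully than in the paper, which asserts the equivalence without detail.
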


\begin{remark}
Corollary \ref{colLp1} says that for domains with Lipschitz boundary, one can always gain the regularity of the output space to an exponent bigger than $2$ $\left(\text{i.e. }L^{q},q>2\right)$,
given that the regularity of the input is high enough. This is not the case for non-Lipschitz domains. For instance, consider the Hartogs triangle domain $\Omega_{\gamma}:=\left\{ \left(z_{1},z_{2}\right)\in\mathbb{C}^{2}:\left|z_{1}\right|^{\gamma}<\left|z_{2}\right|<1\right\} $, with $\gamma>0$ and $\gamma\notin\mathbb{Q}$. This is a non-Lipschitz pseudoconvex domain (see e.g.
\cite{ZWO99}).  It is known that for any $p\in\left(1,\infty\right)$ and $q>2$, the Bergman projection associated
to $\Omega_{\gamma}$ cannot be bounded from $L^{p}\left(\Omega_{\gamma}\right)$
to $L^{q}\left(\Omega_{\gamma}\right)$, see \cite[p. 2681]{EdMcN17}.

\end{remark}

\section{The pluricomplex Green function}\label{plugreen}

We now recall some well-known results of the pluricomplex Green function. Let $\Omega$ be a bounded pseudoconvex domain in $\mathbb{C}^{n}.$
The pluricomplex Green function with a pole $w\in\Omega$ is defined
by 
\[
G\left(\cdot,w\right):=\sup\left\{ u\left(\cdot\right):u\in PSH^{-}\left(\Omega\right),\limsup_{z\rightarrow w}\left(u\left(z\right)-\log\left|z-w\right|\right)<\infty\right\} .
\]
Here $PSH^{-}\left(\Omega\right)$ denotes the set of all negative plurisubharmonic functions on $\Omega$. The following results are used in the sequel.

\begin{proposition}[Herbort \cite{Her99}, B{\l}ocki \cite{Blo15}]\label{HER}
Let $\Omega$ be a bounded pseudoconvex domain and let $t$ be any positive
number. Then
\begin{enumerate}
\item For any $f\in A^{2}\left(\Omega\right)$ and any $w\in\Omega,$ 
\begin{equation}\label{herbo1}
\intop_{\left\{ G\left(\cdot,w\right)<-t\right\} }\left|f\left(z\right)\right|^{2}dz\geq e^{-2nt}\dfrac{\left|f\left(w\right)\right|^{2}}{K\left(w,w\right)}.
\end{equation}

\item For any $w\in\Omega,$ 
\begin{equation}\label{herbo2}
K\left(w,w\right)\geq e^{-2nt}K_{\left\{ G\left(\cdot,w\right)<-t\right\} }\left(w,w\right),
\end{equation}
where $K_{\left\{ G\left(\cdot,w\right)<-t\right\} }$ denotes the
Bergman kernel of $\left\{ G\left(\cdot,w\right)<-t\right\} $.
\end{enumerate}

\end{proposition}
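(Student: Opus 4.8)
```latex
\noindent\textbf{Proof proposal for Proposition \ref{HER}.}

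The plan is to prove both statements simultaneously by applying an
$L^{2}$-existence theorem for $\overline{\partial}$ with a carefully
chosen plurisubharmonic weight built from the Green function. The guiding
idea is that $G\left(\cdot,w\right)$ behaves like $\log\left|z-w\right|$
near the pole, so that $e^{2nG\left(\cdot,w\right)}$ carries the correct
singularity to extract a point evaluation via the Ohsawa--Takegoshi
mechanism. First I would recall the Donnelly--Fefferman/Ohsawa--Takegoshi
type setup: for a plurisubharmonic weight $\varphi$ on the pseudoconvex
domain $\Omega$, any $\overline{\partial}$-closed $(0,1)$-form $\alpha$
admits a solution $u$ to $\overline{\partial}u=\alpha$ with the standard
weighted $L^{2}$ estimate. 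The weight I would use is a multiple of
$G\left(\cdot,w\right)$; since $G\left(\cdot,w\right)$ is negative
plurisubharmonic with the prescribed logarithmic pole, the sublevel
set $\left\{G\left(\cdot,w\right)<-t\right\}$ is itself pseudoconvex,
which is what lets us restrict attention to it.

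For part (1), fix $f\in A^{2}\left(\Omega\right)$ and set
$\Omega_{t}:=\left\{G\left(\cdot,w\right)<-t\right\}$. The key step is
to reproduce the value $f\left(w\right)$ from the integral of
$\left|f\right|^{2}$ over $\Omega_{t}$. I would consider the extremal
characterization
\[
\dfrac{1}{K_{\Omega_{t}}\left(w,w\right)}
=\inf\left\{\intop_{\Omega_{t}}\left|g\right|^{2}dV:
g\in A^{2}\left(\Omega_{t}\right),\ g\left(w\right)=1\right\},
\]
and compare the competitor $g=f/f\left(w\right)$ on $\Omega_{t}$ against
the global minimizer. The analytic heart of the matter is a lower bound
of the form $K_{\Omega_{t}}\left(w,w\right)\le e^{2nt}K_{\Omega}\left(w,w\right)$,
which is exactly \eqref{herbo2}; once that is in hand, combining it with
the extremal property of $K_{\Omega_{t}}$ against the test function $f$
yields \eqref{herbo1}. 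So the two parts are genuinely linked, and I would
establish the comparison \eqref{herbo2} first.

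To prove \eqref{herbo2}, let $u_{0}$ be the minimizer for
$K_{\Omega_{t}}\left(w,w\right)$, so $u_{0}\in A^{2}\left(\Omega_{t}\right)$
with $u_{0}\left(w\right)=1$ and
$\intop_{\Omega_{t}}\left|u_{0}\right|^{2}=1/K_{\Omega_{t}}\left(w,w\right)$.
I would produce a holomorphic extension $\tilde{u}$ of (a cutoff of)
$u_{0}$ to all of $\Omega$ with $\tilde{u}\left(w\right)=1$ and a
controlled global $L^{2}$ norm. The mechanism: multiply $u_{0}$ by a
smooth cutoff supported where $G\left(\cdot,w\right)<-t$, apply
$\overline{\partial}$ to obtain a $\overline{\partial}$-closed form
supported in the annular region $\left\{-t-\epsilon<G\left(\cdot,w\right)<-t\right\}$,
and solve $\overline{\partial}v=\overline{\partial}\left(\chi u_{0}\right)$
with the weight $\varphi=2n\,\max\left\{G\left(\cdot,w\right)+t,0\right\}$
(or a smoothed version), which vanishes on $\Omega_{t}$ and grows
linearly outside. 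On the support of $\overline{\partial}\chi$ the factor
$e^{-\varphi}$ is bounded below by $e^{-2n\epsilon}$, so the $L^{2}$
estimate transfers the bound $\intop_{\Omega_{t}}\left|u_{0}\right|^{2}$
(essentially unchanged by the weight there) to a global bound on $v$.
Since $\varphi$ has a zero of high enough order at no extra cost and the
correction $v$ can be arranged to vanish at $w$, the extension
$\tilde{u}=\chi u_{0}-v$ is holomorphic on $\Omega$, satisfies
$\tilde{u}\left(w\right)=1$, and obeys
$\intop_{\Omega}\left|\tilde{u}\right|^{2}\le e^{2nt}\intop_{\Omega_{t}}\left|u_{0}\right|^{2}$
up to the $\epsilon$-loss; the extremal property
$1/K_{\Omega}\left(w,w\right)\le\intop_{\Omega}\left|\tilde{u}\right|^{2}$
then gives \eqref{herbo2} after sending $\epsilon\to0$.

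The step I expect to be the main obstacle is the honest bookkeeping of
the weight and the vanishing condition at the pole. One must ensure
simultaneously that (i) the solution $v$ to the $\overline{\partial}$
problem vanishes at $w$ so that $\tilde{u}\left(w\right)=1$ is preserved,
which requires either an extra singular weight with a pole at $w$ or a
separate argument forcing $v\left(w\right)=0$; and (ii) the exponential
factor $e^{-\varphi}$ produces exactly the constant $e^{-2nt}$ and not
some larger power, so that the dimensional constant $2n$ is sharp. The
delicacy is that $G\left(\cdot,w\right)$ is only plurisubharmonic, not
smooth, so one works with regularized sublevel sets and passes to the
limit, monitoring that the constant $e^{-2nt}$ survives the limiting
process. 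Handling the singularity of the weight at $w$ without losing
the point evaluation is precisely the subtle point where the
Ohsawa--Takegoshi extension with its optimal constant is invoked.
```
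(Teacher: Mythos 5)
Note first that the paper does not prove Proposition \ref{HER} at all: it is imported from Herbort \cite{Her99} and B{\l}ocki \cite{Blo15}, so your attempt has to be measured against the arguments in those references. The sound part of your proposal is the reduction of \eqref{herbo1} to \eqref{herbo2}: restricting $f\in A^{2}\left(\Omega\right)$ to $\Omega_{t}:=\left\{ G\left(\cdot,w\right)<-t\right\}$ and combining the extremal property $\left|f\left(w\right)\right|^{2}\leq K_{\Omega_{t}}\left(w,w\right)\intop_{\Omega_{t}}\left|f\right|^{2}$ with $K_{\Omega_{t}}\left(w,w\right)\leq e^{2nt}K\left(w,w\right)$ gives \eqref{herbo1} immediately; this is exactly how the implication runs in the literature. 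The problem is your proof of \eqref{herbo2}, which breaks at precisely the two points you dismiss as ``bookkeeping''; they are not bookkeeping, they are the entire content of the theorem.

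Concretely: your weight $\varphi=2n\max\left\{ G\left(\cdot,w\right)+t,0\right\}$ vanishes identically on $\Omega_{t}$, while your data $\overline{\partial}\left(\chi u_{0}\right)$ is supported in $\left\{ -t-\epsilon\leq G\left(\cdot,w\right)\leq-t\right\} \subset\overline{\Omega_{t}}$. Hence $i\partial\overline{\partial}\varphi=0$ on a neighborhood of the support of the data, and the H{\"o}rmander-type estimate you invoke is vacuous: its right-hand side measures the form in the degenerate metric $i\partial\overline{\partial}\varphi$, so it is $+\infty$ wherever the form does not vanish. Moreover, since $0\leq\varphi\leq2nt$, the factor $e^{-\varphi}$ is bounded and has no singularity at $w$, so nothing in your scheme forces $v\left(w\right)=0$; the phrase ``$v$ can be arranged to vanish at $w$'' is exactly what is unproven. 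The standard repair is to use the singular weight $2nG\left(\cdot,w\right)$ itself, whose pole $2n\log\left|z-w\right|$ makes $\left|v\right|^{2}e^{-2nG\left(\cdot,w\right)}$ non-integrable at $w$ unless $v\left(w\right)=0$, and to handle the resulting degeneracy not by plain H{\"o}rmander but by the twisted Donnelly--Fefferman/Berndtsson--Charpentier estimate with an auxiliary bounded function such as $-\log\left(-G\left(\cdot,w\right)\right)$ --- that is, exactly the machinery behind Proposition \ref{Pro-1} of this paper and \cite{BeCh00}. Even after these repairs, the cutoff-and-correct argument yields only $Ce^{-2nt}$ in \eqref{herbo2} with $C>1$ (losses from the twisted estimate and from recombining $\chi u_{0}$ with $v$ do not vanish as $\epsilon\rightarrow0$); this is why Herbort's original constant in \cite{Her99} was not sharp. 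The clean constant $e^{-2nt}$ of \cite{Blo15} comes from a different mechanism: either the Ohsawa--Takegoshi theorem with optimal constant (whose proof requires ODE-optimized weights, as in \cite{Blo13}), or Lempert's observation that, by Berndtsson's theorem on plurisubharmonic variation of Bergman kernels applied to the pseudoconvex fibration $\left\{ \left(\zeta,z\right):\mathrm{Re}\,\zeta<0,\,G\left(z,w\right)<\mathrm{Re}\,\zeta\right\}$, the function $t\mapsto\log K_{\Omega_{t}}\left(w,w\right)$ is convex; since $\Omega_{t}\supset B\left(w,\delta\left(w\right)e^{-t}\right)$ gives $\log K_{\Omega_{t}}\left(w,w\right)\leq2nt+C_{w}$ with $C_{w}$ independent of $t$, convexity forces the slope to be at most $2n$, and \eqref{herbo2} follows. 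Your proposal would need to be rebuilt along one of these lines (or supplemented by a tensor-power trick to remove the constant $C$) before it proves the stated inequalities.
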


\begin{proposition}[Chen \cite{ChFu11}, B{\l}ocki \cite{Blo05}]\label{BLO}
Let $\Omega$ be a bounded pseudoconvex domain in $\mathbb{C}^{n}.$
Assume that there exists a plurisubharmonic function $\varphi$ on
$\Omega$ such that for any $z\in\Omega$, 
\[
C_{1}\delta^{a}\left(z\right)\leq-\varphi\left(z\right)\leq C_{2}\delta^{b}\left(z\right),
\]
where $C_{1},C_{2},a$ and $b$ are positive constants. Then there
exist positive constants $C$ and $\delta_{0}$ such that 
\begin{equation}\label{eq:bneww2}
\left\{ G\left(\cdot,w\right)<-1\right\} \subset\left\{ \dfrac{1}{C}\delta^{\left.a\right/b}\left(w\right)\left|\log\delta\left(w\right)\right|^{\left.-1\right/b}\leq\delta\left(\cdot\right)\leq C\delta^{\left.b\right/a}\left(w\right)\left|\log\delta\left(w\right)\right|^{\left.n\right/a}\right\} ,
\end{equation}
for any $w\in\Omega$ and $\delta\left(w\right)<\delta_{0}$.

If  $\Omega$ is a convex domain then 
\begin{equation}
\left\{ G\left(\cdot,w\right)<-t\right\} \subset\left\{ \dfrac{e^{t}-1}{e^{t}+1}\delta\left(w\right)\leq\delta\left(\cdot\right)\leq\dfrac{e^{t}+1}{e^{t}-1}\delta\left(w\right)\right\} ,\label{eq:bnew1}
\end{equation}
for any $w\in\Omega$ and any $t >0$.
\end{proposition}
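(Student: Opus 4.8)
The plan is to reduce both inclusions to \emph{lower} bounds on the pluricomplex Green function. Indeed, $\{G(\cdot,w)<-t\}$ lies in the stated annular region exactly when every $z$ whose $\delta(z)$ falls \emph{outside} that region satisfies $G(z,w)\ge -t$; so in each case it suffices to exhibit a negative plurisubharmonic competitor $u$ with a logarithmic pole at $w$ and $u(z)\ge -t$, whence $G(z,w)\ge u(z)\ge -t$. For the convex statement \eqref{eq:bnew1} I expect a clean, self-contained argument; the general statement \eqref{eq:bneww2} I would obtain by the barrier method of B{\l}ocki and Herbort, using the hypothesized function $\varphi$.

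For the convex case the competitors come from the right half-plane $H=\{\operatorname{Re}\zeta>0\}$, whose Green function is $g_H(\zeta,\zeta_0)=\log\bigl|\tfrac{\zeta-\zeta_0}{\zeta+\overline{\zeta_0}}\bigr|$. Given a boundary point $p$ with inner unit normal $n$, I set $\pi(\xi):=\langle\xi-p,n\rangle$ with $\langle\cdot,\cdot\rangle$ the Hermitian inner product; then $\pi$ is $\mathbb{C}$-linear, $|\pi(\xi)|\le|\xi|$, and by convexity $\pi(\Omega)\subset H$. Since $B(x,\delta(x))\subset\Omega\subset\{\operatorname{Re}\pi\ge0\}$ and $\operatorname{Re}\pi$ is real-affine with unit gradient, one gets $\operatorname{Re}\pi(x)\ge\delta(x)$ for every $x\in\Omega$. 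The pullback $u:=g_H(\pi(\cdot),\pi(w))$ is negative, plurisubharmonic, and satisfies $u(x)\le\log|x-w|+O(1)$ near $w$ (because $|\pi|\le|\,\cdot\,|$), so it is admissible and
\[
G_\Omega(z,w)\ \ge\ g_H\bigl(\pi(z),\pi(w)\bigr).
\]

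I would then pick the base point $p$ according to the inequality at hand. Choosing $p$ nearest to $z$ makes $\pi(z)=\delta(z)$ real while $a:=\operatorname{Re}\pi(w)\ge\delta(w)$; writing $\pi(w)=a+bi$, one checks that $g_H$ is smallest when $b=0$, so the binding condition is the scalar inequality $\tfrac{a-\delta(z)}{a+\delta(z)}\ge e^{-t}$, i.e.\ $\delta(z)\le a\,\tfrac{e^t-1}{e^t+1}$. As the threshold only relaxes when $a$ grows, this forces $G(z,w)\ge -t$ whenever $\delta(z)<\tfrac{e^t-1}{e^t+1}\delta(w)$. Choosing instead $p$ nearest to $w$ gives $\pi(w)=\delta(w)$ and $\operatorname{Re}\pi(z)\ge\delta(z)$, and the same scalar estimate forces $G(z,w)\ge -t$ once $\delta(z)>\tfrac{e^t+1}{e^t-1}\delta(w)$. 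These are the two contrapositives of \eqref{eq:bnew1}, the sharp constants arising as the purely real ($b=0$) extremal case of the half-plane kernel.

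For \eqref{eq:bneww2} I would again build competitors, now from $\varphi$. The two-sided hypothesis gives $-b\log\delta+O(1)\le -\log(-\varphi)\le -a\log\delta+O(1)$, so $-\log(-\varphi)$ is plurisubharmonic and comparable to $|\log\delta|$; combining a normalized multiple of $\varphi$ with the elementary competitor $\log(|\cdot-w|/\operatorname{diam}\Omega)$ should convert $G(z,w)<-1$ into polynomial relations between $\delta(z)$ and $\delta(w)$, with the dimension $n$ and the logarithmic corrections entering through Herbort's localization estimate \eqref{herbo1}. I expect the main obstacle to lie exactly here: calibrating the barrier so that the exponents $a/b$, $b/a$ and the powers $|\log\delta(w)|^{-1/b}$, $|\log\delta(w)|^{n/a}$ emerge with constants uniform in $w$. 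The convex case is easier precisely because the half-plane supplies an exact, scale-invariant barrier with no logarithmic loss.
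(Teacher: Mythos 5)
The paper itself gives no proof of Proposition \ref{BLO}: it is quoted from Chen--Fu \cite{ChFu11} and B{\l}ocki \cite{Blo05}, so your write-up stands or falls on its own. Your proof of the convex inclusion \eqref{eq:bnew1} is correct and complete, and it is essentially the supporting-hyperplane argument of the cited sources: when $p$ is the nearest boundary point to $z$, the inclusion $B(z,\delta(z))\subset\Omega$ together with Cauchy--Schwarz forces the supporting hyperplane at $p$ to have normal $(z-p)/\left|z-p\right|$, so $\pi(z)=\delta(z)$ exactly while $\operatorname{Re}\pi(w)\geq\delta(w)$; the pullback $g_{H}(\pi(\cdot),\pi(w))$ is an admissible competitor because $\left|\pi(x)-\pi(w)\right|\leq\left|x-w\right|$; and minimizing the half-plane kernel in the imaginary part, then using monotonicity in the real part, reduces both contrapositives to the scalar inequality $(\alpha-\beta)/(\alpha+\beta)\geq e^{-t}$, which produces exactly the constants $(e^{t}-1)/(e^{t}+1)$ and $(e^{t}+1)/(e^{t}-1)$. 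These details all check out.

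The general inclusion \eqref{eq:bneww2}, however, is not proven, and the gap is exactly where you flagged it --- but it is worth being precise about why the tools you name cannot close it. The barrier you sketch (glue $A\varphi$ with $\log\left(\left|\cdot-w\right|/R\right)$, $R=\operatorname{diam}\Omega$, outside $B(w,\delta(w)/2)$; plurisubharmonic gluing forces $A\approx\delta(w)^{-a}\left|\log\delta(w)\right|$) does yield the \emph{lower} bound: outside that ball it gives $G(\cdot,w)\geq A\varphi\geq-AC_{2}\delta^{b}$, whose contrapositive on $\left\{ G(\cdot,w)<-1\right\}$ is $\delta(\cdot)\gtrsim\delta^{a/b}(w)\left|\log\delta(w)\right|^{-1/b}$. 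But it cannot yield the \emph{upper} bound $\delta(\cdot)\lesssim\delta^{b/a}(w)\left|\log\delta(w)\right|^{n/a}$: at points deep inside $\Omega$ the term $A\varphi$ is enormously negative (since $A\sim\delta(w)^{-a}$ blows up), so the maximum is attained by the logarithmic term and all the barrier recovers there is the trivial estimate $G(z,w)\geq\log\left(\left|z-w\right|/R\right)$, which merely confines $\left\{ G(\cdot,w)<-1\right\}$ to the ball $\left|z-w\right|<R/e$ and does not force $\delta(z)\rightarrow0$ as $\delta(w)\rightarrow0$. Herbort's estimate \eqref{herbo1} does not help either: it is an integral lower bound over the sublevel set and places no pointwise constraint preventing a point of that set from sitting deep inside $\Omega$. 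The missing ingredient is pluripotential-theoretic: B{\l}ocki's quasi-symmetry estimate for the Green function (roughly $\left|G(z,w)\right|^{n}\lesssim\left|G(w,z)\right|\left(\log\left(R/\left|z-w\right|\right)\right)^{n-1}$ on hyperconvex domains), proved via the Monge--Amp\`ere comparison principle and the fact that $\left(dd^{c}G(\cdot,w)\right)^{n}$ has total mass $(2\pi)^{n}$ concentrated at $w$; combining it with your own barrier bound applied with the roles of $z$ and $w$ exchanged produces precisely the exponent $b/a$ and the dimensional factor $\left|\log\delta(w)\right|^{n/a}$. That comparison-principle step is the real substance of the first half of the proposition, and it is absent from your proposal.
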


For several applications of the pluricomplex Green function, we refer readers to  \cite{ChFu11, Blo05, Che16, Che17, Blo13}. The following result can be obtained by using these interesting properties.

\begin{proposition}\label{ProAp1}
Let $\Omega\subset\mathbb{C}^{n}$ be a strongly pseudoconvex domain with smooth boundary
and let $\alpha\in\mathbb{R}$. If the Bergman-Toeplitz operator $T_{\alpha}$, defined by
\[
f\rightarrow T_{\alpha}\left(f\right)\left(z\right):=\intop_{\Omega}K\left(z,w\right)f\left(w\right)\delta^{\alpha}\left(w\right)dV\left(w\right),
\]
is bounded from $L^{p}\left(\Omega\right)$ to $L^{q}\left(\Omega\right)$,
with $1<p\leq q<\infty$ then 
\[
\alpha\geq\left(n+1\right)\left(\dfrac{1}{p}-\dfrac{1}{q}\right).
\]
\end{proposition}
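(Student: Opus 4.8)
The plan is to establish this as a \emph{necessary condition} by testing the operator on a family of characteristic functions concentrated in non-isotropic balls near the boundary and then letting the balls shrink to $\partial\Omega$. Since $\Omega$ is strongly pseudoconvex with smooth boundary, for $w\in\Omega$ close to $\partial\Omega$ I would work in boundary-adapted coordinates centered at the nearest boundary point, in which the complex normal direction carries scale $\delta(w)$ and the complex tangential directions carry scale $\sqrt{\delta(w)}$. Fixing a small constant $c>0$, I let $P_{w}$ be the non-isotropic polydisc of these radii about $w$; for $c$ small enough it satisfies $P_{w}\Subset\Omega$, $\delta(\cdot)\approx\delta(w)$ on $P_{w}$, and $|P_{w}|\approx\delta(w)^{n+1}$.

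The two analytic inputs I would invoke are the classical boundary estimates for strongly pseudoconvex domains: the on-diagonal asymptotic $K(w,w)\approx\delta(w)^{-(n+1)}$, and the \emph{localization} of the kernel, namely that after shrinking $c$ one has $\operatorname{Re}K(w,w')\gtrsim K(w,w)$ for all $w'\in P_{w}$. The second input follows from Fefferman's asymptotic expansion of the Bergman kernel, which controls both the size and the argument of $K(w,w')$ as $w'$ ranges over a non-isotropic ball about $w$, so that no cancellation occurs.

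With these in hand I would take the test function $f=\chi_{P_{w}}$, for which $\|f\|_{L^{p}}=|P_{w}|^{1/p}\approx\delta(w)^{(n+1)/p}$. The key observation is that $T_{\alpha}(f)=P(\delta^{\alpha}\chi_{P_{w}})$ is holomorphic on $\Omega$. Evaluating at the center and using $\delta\approx\delta(w)$ on $P_{w}$ together with the localization estimate gives the lower bound
\[
|T_{\alpha}(f)(w)|\geq\operatorname{Re}T_{\alpha}(f)(w)\gtrsim\delta(w)^{\alpha}\,K(w,w)\,|P_{w}|\approx\delta(w)^{\alpha}.
\]
Since $|T_{\alpha}(f)|^{q}$ is plurisubharmonic, the sub-mean value inequality over the polydisc $P_{w}$ then yields
\[
|T_{\alpha}(f)(w)|^{q}\lesssim|P_{w}|^{-1}\intop_{P_{w}}|T_{\alpha}(f)|^{q}\leq|P_{w}|^{-1}\|T_{\alpha}(f)\|_{L^{q}}^{q},
\]
whence $\|T_{\alpha}(f)\|_{L^{q}}\gtrsim\delta(w)^{\alpha+(n+1)/q}$.

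Finally, the assumed bound $\|T_{\alpha}(f)\|_{L^{q}}\lesssim\|f\|_{L^{p}}$ forces $\delta(w)^{\alpha+(n+1)/q}\lesssim\delta(w)^{(n+1)/p}$ with implied constants independent of $w$; letting $\delta(w)\to0^{+}$ and comparing the exponents gives $\alpha+(n+1)/q\geq(n+1)/p$, which is exactly the claim. The hard part will be the kernel localization with control on the \emph{argument} of $K(w,w')$, so that the integral defining $T_{\alpha}(f)(w)$ does not suffer cancellation; this is precisely where strong pseudoconvexity and the smoothness of $\partial\Omega$ enter, through the precise Bergman kernel asymptotics, and it is also what fixes the geometrically correct non-isotropic scale $\delta,\sqrt{\delta}$ and hence the exponent $n+1$ in the conclusion.
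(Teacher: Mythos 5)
Your argument is correct, and it is a genuinely different proof from the one in the paper. You extract the necessity of $\alpha\geq\left(n+1\right)\left(1/p-1/q\right)$ by testing $T_{\alpha}$ on indicator functions of non-isotropic Carleson boxes $P_{w}$, getting the pointwise lower bound $\left|T_{\alpha}\left(\chi_{P_{w}}\right)\left(w\right)\right|\gtrsim\delta^{\alpha}\left(w\right)$ from kernel localization, and converting it into an $L^{q}$ lower bound via the sub-mean value inequality for the plurisubharmonic function $\left|T_{\alpha}\left(\chi_{P_{w}}\right)\right|^{q}$ on the polydisc $P_{w}$; all steps are sound, including the volume count $\left|P_{w}\right|\approx\delta^{n+1}\left(w\right)$ and the uniformity of constants as $\delta\left(w\right)\rightarrow0$. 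The paper instead tests $T_{\alpha}$ on the kernel functions themselves: using the reproducing property and H{\"o}lder it bounds $\intop_{\Omega}\left|K\left(z,w\right)\right|^{2}\delta^{\alpha}\left(w\right)dV\left(w\right)$ above by $\left\Vert T_{\alpha}\right\Vert \left\Vert K\left(\cdot,z\right)\right\Vert _{L^{p}}\left\Vert K\left(z,\cdot\right)\right\Vert _{L^{q'}}\lesssim\delta^{-\left(n+1\right)\left(1-\frac{1}{p}+\frac{1}{q}\right)}\left(z\right)$ via \eqref{eq:p2}, and below by $\delta^{\alpha-n-1}\left(z\right)\left|\log\delta\left(z\right)\right|^{-\alpha}$ using Herbort's inequality \eqref{herbo1} and the Green-function sublevel-set inclusion \eqref{eq:bneww2}; the logarithmic loss is harmless for the exponent comparison. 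The trade-off is instructive. Your route needs no pluripotential theory and loses no logarithm, but its crux is exactly the point you flag: the no-cancellation estimate $\operatorname{Re}K\left(w,w'\right)\gtrsim K\left(w,w\right)$ on $P_{w}$, which requires Fefferman/Boutet de Monvel--Sj{\"o}strand asymptotics (or at least anisotropic derivative bounds for the kernel combined with \eqref{eq:p1}) --- precisely the kind of Kobayashi-ball machinery from \cite{ARS12} that the paper's remark says its argument is designed to bypass. The paper's route replaces all pointwise off-diagonal information by an integral lower bound over $\left\{ G\left(\cdot,z\right)<-1\right\} $ (Proposition \ref{HER} needs only the on-diagonal value $K\left(z,z\right)$), at the price of invoking the $L^{p}$-norm upper bounds \eqref{eq:p2} for $K\left(z,\cdot\right)$; both proofs share the on-diagonal lower bound \eqref{eq:p1}. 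In short, yours is a sharper but heavier-machinery version of the classical Carleson-box argument, while the paper's is a softer argument whose only boundary-geometry input is the existence of a plurisubharmonic defining function, which is why it generalizes beyond the strongly pseudoconvex case.
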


\begin{remark}
This result has been obtained in \cite{ARS12} by using several estimates of Kobayashi balls and $\theta$-Carleson measures in Bergman spaces. The proof given below is a direct consequence of Proposition \ref{HER} and Proposition \ref{BLO}. Note that the converse statement is also true, i.e. if $\alpha\geq\left(n+1\right)\left(\left(\left.1\right/p\right)-\left(\left.1\right/q\right)\right)$
then $T_{\alpha}$ maps from $L^{p}\left(\Omega\right)$ to $L^{q}\left(\Omega\right)$
continuously, see \cite{CuMcN06,KLT17}.
\end{remark}

\begin{proof}
Without loss of generality we may assume that $\alpha\geq0$. First, it is well-known that for any strongly pseudoconvex domain $\Omega$ with smooth boundary, there is a positive constant $C\left(\Omega\right)$ such that
\begin{equation}
K\left(z,z\right)\geq C\delta^{-n-1}\left(z\right),\label{eq:p1}
\end{equation}
for any $z\in\Omega$. Moreover, for any $p>1$, there is a constant $C\left(p,\Omega\right)$ such that
\begin{equation}\label{eq:p2}
\left\Vert K\left(z,\cdot\right)\right\Vert _{L^{p}\left(\Omega\right)}\leq C\delta^{-\left(n+1\right)\left(1-\frac{1}{p}\right)}\left(z\right),
\end{equation}
for any $z\in\Omega$, see e.g. \cite[Theorem 2.7]{ARS12}, also \cite{Li92,CuMcN06}. 

Now, using properties of the Bergman projection, we have
\begin{eqnarray*}
\intop_{\Omega}\left|K\left(z,w\right)\right|^{2}\delta^{\alpha}\left(w\right)dw & = & \intop_{\Omega}K\left(w,z\right)\delta^{\alpha}\left(w\right)K\left(z,w\right)dw\\
 & = & \intop_{\Omega}K\left(w,z\right)\delta^{\alpha}\left(w\right)\left(\intop_{\Omega}K\left(z,\xi\right)K\left(\xi,w\right)d\xi\right)dw\\
 & = & \intop_{\Omega}\left(\intop_{\Omega}K\left(\xi,w\right)\delta^{\alpha}\left(w\right)K\left(w,z\right)dw\right)K\left(z,\xi\right)d\xi.
\end{eqnarray*}
By  H{\"o}lder's inequality, it follows
\begin{eqnarray}
\intop_{\Omega}\left|K\left(z,w\right)\right|^{2}\delta^{\alpha}\left(w\right)dw & \leq & \left\Vert \intop_{\Omega}K\left(\cdot,w\right)\delta^{\alpha}\left(w\right)K\left(w,z\right)dw\right\Vert _{L^{q}\left(\Omega\right)}\left\Vert K\left(z,\cdot\right)\right\Vert _{L^{q'}\left(\Omega\right)}\nonumber \\ 
 & \lesssim& \left\Vert K\left(z,\cdot\right)\right\Vert _{L^{p}\left(\Omega\right)}\left\Vert K\left(z,\cdot\right)\right\Vert _{L^{q'}\left(\Omega\right)}\label{eq:p3}\\
 & \lesssim& \delta^{-\left(n+1\right)\left(1-\frac{1}{p}+\frac{1}{q}\right)}\left(z\right).\nonumber 
\end{eqnarray}
Here, the second inequality comes from the boundedness of $T_{\alpha}$,
the third follows from \eqref{eq:p2}, and $q'$ is the dual exponent of $q$, i.e. $\left.1\right/q+\left.1\right/q'=1$. On the other hand, by using Proposition \ref{HER}, Proposition \ref{BLO} and \eqref{eq:p1},
we obtain
\begin{eqnarray*}
\intop_{\Omega}\left|K\left(z,w\right)\right|^{2}\delta^{\alpha}\left(w\right)dw & \geq & \intop_{\left\{ G\left(\cdot,z\right)<-1\right\} }\left|K\left(z,w\right)\right|^{2}\delta^{\alpha}\left(w\right)dw\\
 & \gtrsim & \delta^{\alpha}\left(z\right)\left|\log\delta\left(z\right)\right|^{-\alpha}\intop_{\left\{ G\left(\cdot,z\right)<-1\right\} }\left|K\left(z,w\right)\right|^{2}dw\\
 & \gtrsim & \delta^{\alpha}\left(z\right)\left|\log\delta\left(z\right)\right|^{-\alpha}K\left(z,z\right)\\
 & \gtrsim & \delta^{\alpha-n-1}\left(z\right)\left|\log\delta\left(z\right)\right|^{-\alpha}.
\end{eqnarray*}
From this and \eqref{eq:p3}, the conclusion follows by letting $z\rightarrow\partial\Omega$.
\end{proof}

\begin{remark}
A similar approach has been used in \cite{KLT18}  for Hartogs triangle domains.
\end{remark}

\section{Proof of Theorem \ref{Main-1}}\label{Main}

Let us first recall some facts from the theory of Hardy spaces. We refer readers to the books by Stein \cite{Ste72} and Krantz \cite{Kra01} for details.

Let $\Omega\subset\mathbb{C}^{n}$ be a bounded domain with $C^{2}$
boundary. The Hardy space $h^{2}\left(\Omega\right)$ is defined by
\[
h^{2}\left(\Omega\right):=\left\{ f\text{ harmonic on }\Omega:\intop_{\partial\Omega}\left|f\left(z\right)\right|^{2}d\sigma\left(z\right):=\limsup_{\varepsilon\rightarrow0^{+}}\intop_{\delta=\varepsilon}\left|f\left(z\right)\right|^{2}d\sigma\left(z\right)<\infty\right\} .
\]
There always exists a positive constant $\varepsilon_{0}$ depending on $\Omega$ such that
the following norms are equivalent
\[
\left(\intop_{\partial\Omega}\left|f\left(z\right)\right|^{2}d\sigma\left(z\right)\right)^{\frac{1}{2}}\text{ and }\left(\sup_{0<\varepsilon<\varepsilon_{0}}\intop_{\delta=\varepsilon}\left|f\left(z\right)\right|^{2}d\sigma\left(z\right)\right)^{\frac{1}{2}},
\]
for $f\in h^{2}\left(\Omega\right)$. Moreover, there is a constant $C$ depending on $\Omega$ such that 
\begin{equation}
\intop_{\Omega}\left|f\left(z\right)\right|^{2}dV\left(z\right)\leq C\intop_{\partial\Omega}\left|f\left(z\right)\right|^{2}d\sigma\left(z\right),\label{eq:hard1}
\end{equation}
for any $f\in h^{2}\left(\Omega\right)$. That is, the $L^{2}$-norm is dominated by the Hardy space norm for
functions in $h^{2}\left(\Omega\right)$. Finally, we will need the following result, see \cite[Lemma 2.2]{ChFu11}.

\begin{lemma}\label{chhard}
Let $\Omega\subset\mathbb{C}^{n}$ be a bounded domain with $C^{2}$
boundary. For any harmonic function $u$ on $\Omega$,
\begin{equation}
\limsup_{\varepsilon\rightarrow0^{+}}\intop_{\delta=\varepsilon}\left|u\left(z\right)\right|^{2}d\sigma\left(z\right)=\limsup_{r\rightarrow1^{-}}\left(1-r\right)\intop_{\Omega}\left|u\left(z\right)\right|^{2}\delta^{-r}\left(z\right)dV\left(z\right).\label{eq:hard2}
\end{equation}
\end{lemma}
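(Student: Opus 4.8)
The plan is to convert the weighted volume integral into a one-dimensional integral of slice integrals via the coarea formula, and then to treat the weight $(1-r)\delta^{-r}$ as an approximate identity concentrating on $\partial\Omega$ as $r\to1^{-}$. Write $v:=|u|^{2}$, $S_{\varepsilon}:=\{\delta=\varepsilon\}$, $\Omega_{\varepsilon}:=\{\delta>\varepsilon\}$ and $\Phi(\varepsilon):=\intop_{S_{\varepsilon}}v\,d\sigma$. Since $\partial\Omega\in C^{2}$, the distance function $\delta$ is $C^{2}$ and satisfies $|\nabla\delta|=1$ on a one-sided collar $\{0<\delta<\varepsilon_{0}\}$, so the level sets $S_{\varepsilon}$ foliate this collar by $C^{2}$ hypersurfaces. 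The coarea formula then gives
\[
(1-r)\intop_{\{\delta<\varepsilon_{0}\}}v\,\delta^{-r}\,dV=(1-r)\intop_{0}^{\varepsilon_{0}}\varepsilon^{-r}\Phi(\varepsilon)\,d\varepsilon,
\]
while the complementary piece is bounded by $(1-r)\varepsilon_{0}^{-r}\intop_{\{\delta\ge\varepsilon_{0}\}}v\,dV$ and hence tends to $0$ as $r\to1^{-}$, the last integral being finite. Thus it suffices to analyse the average of $\Phi$ against $d\mu_{r}(\varepsilon):=(1-r)\varepsilon^{-r}\,d\varepsilon$, whose total mass $\varepsilon_{0}^{1-r}$ tends to $1$ and which concentrates at $\varepsilon=0$.

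The decisive point is a monotonicity property of $\Phi$ coming from the subharmonicity of $v$. Since $u$ is harmonic, $\Delta v=2|\nabla u|^{2}\ge0$. Differentiating $\Phi$ along the unit normal flow of $\delta$ and using the first-variation formula together with $\operatorname{div}_{S_{\varepsilon}}\nabla\delta=\Delta\delta$ (which holds because $|\nabla\delta|=1$) yields
\[
\Phi'(\varepsilon)=\intop_{S_{\varepsilon}}\left(\partial_{\delta}v+v\,\Delta\delta\right)d\sigma,
\]
where $\partial_{\delta}v=\nabla\delta\cdot\nabla v$. The divergence theorem on $\Omega_{\varepsilon}\Subset\Omega$, whose outward unit normal on $S_{\varepsilon}$ is $-\nabla\delta$, gives $\intop_{S_{\varepsilon}}\partial_{\delta}v\,d\sigma=-\intop_{\Omega_{\varepsilon}}\Delta v\,dV\le0$. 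Since $|\Delta\delta|\le C$ on the collar (this is where $\partial\Omega\in C^{2}$ is used), we obtain $\Phi'(\varepsilon)\le C\Phi(\varepsilon)$ on $(0,\varepsilon_{0})$, so $e^{-C\varepsilon}\Phi(\varepsilon)$ is nonincreasing. Consequently $L:=\lim_{\varepsilon\to0^{+}}\Phi(\varepsilon)$ exists in $[0,\infty]$ and equals $\limsup_{\varepsilon\to0^{+}}\Phi(\varepsilon)$, the left-hand side of \eqref{eq:hard2}.

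With the existence of this limit in hand, the conclusion is a standard Abelian argument. For any $\eta\in(0,\varepsilon_{0})$ I would split $\intop_{0}^{\varepsilon_{0}}\Phi\,d\mu_{r}$ into integrals over $(0,\eta)$ and $(\eta,\varepsilon_{0})$, bounding the first by $\big(\sup_{(0,\eta)}\Phi\big)\,\mu_{r}\big((0,\eta)\big)$ and the second by $(1-r)\eta^{-r}\intop_{\eta}^{\varepsilon_{0}}\Phi\,d\varepsilon\to0$, using $\mu_{r}\big((0,\eta)\big)\to1$ and $\mu_{r}\big((\eta,\varepsilon_{0})\big)\to0$. Letting $r\to1^{-}$ and then $\eta\to0^{+}$ forces $\lim_{r\to1^{-}}\intop_{0}^{\varepsilon_{0}}\Phi\,d\mu_{r}=L$, with the case $L=\infty$ handled by truncating $\Phi$ at an arbitrary height before taking limits. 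Combined with the first paragraph this yields \eqref{eq:hard2}. I expect the main obstacle to be the rigorous justification of the first-variation identity for $\Phi$: the smoothness of the normal map and of $\delta$ on the collar, and the resulting $C^{1}$ dependence of $\Phi$ on $\varepsilon$. Once the differential inequality $\Phi'\le C\Phi$ is secured, the remainder is routine.
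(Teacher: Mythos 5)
Your proof is correct, and it supplies something the paper itself does not: the paper never proves Lemma \ref{chhard}, but quotes it from \cite[Lemma 2.2]{ChFu11}, so your argument is a self-contained reconstruction, and it runs along the same general lines as that original proof (coarea formula plus subharmonicity of $\left|u\right|^{2}$). You have also correctly isolated the one point where harmonicity is indispensable: since $d\mu_{r}:=\left(1-r\right)\varepsilon^{-r}d\varepsilon$ is an averaging family, the approximate-identity argument alone only gives
\[
\liminf_{\varepsilon\rightarrow0^{+}}\Phi\leq\liminf_{r\rightarrow1^{-}}\intop_{0}^{\varepsilon_{0}}\Phi\,d\mu_{r}\leq\limsup_{r\rightarrow1^{-}}\intop_{0}^{\varepsilon_{0}}\Phi\,d\mu_{r}\leq\limsup_{\varepsilon\rightarrow0^{+}}\Phi,
\]
and the claimed equality of the two limsups genuinely fails for oscillating integrands (e.g.\ $\Phi$ alternating between $0$ and $1$ on dyadic scales has averages tending to $\left.1\right/2$); your differential inequality $\Phi'\leq C\Phi$, which forces $\lim_{\varepsilon\rightarrow0^{+}}\Phi$ to exist in $\left[0,\infty\right]$, is exactly what closes this gap, and it is where both the subharmonicity of $\left|u\right|^{2}$ and the $C^{2}$ regularity of $\partial\Omega$ (through the bound $\left|\Delta\delta\right|\leq C$ on the collar) enter. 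One simplification: the technical obstacle you flag at the end, namely the rigorous first-variation formula and differentiability of $\Phi$, can be bypassed entirely. Apply the divergence theorem to $\operatorname{div}\left(v\nabla\delta\right)=\nabla v\cdot\nabla\delta+v\Delta\delta$ on the shell $\left\{ \varepsilon_{1}<\delta<\varepsilon_{2}\right\} $, whose outward unit normals on the two boundary components are $\pm\nabla\delta$; since $\left|\nabla\delta\right|=1$ this yields
\[
\Phi\left(\varepsilon_{2}\right)-\Phi\left(\varepsilon_{1}\right)=\intop_{\left\{ \varepsilon_{1}<\delta<\varepsilon_{2}\right\} }\left(\nabla v\cdot\nabla\delta+v\Delta\delta\right)dV\leq C\intop_{\varepsilon_{1}}^{\varepsilon_{2}}\Phi\left(t\right)dt,
\]
because the first term is $\leq0$ by coarea together with your own divergence-theorem computation on each $\Omega_{t}$. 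The integral form of Gr{\"o}nwall's inequality then gives $\Phi\left(\varepsilon_{2}\right)\leq e^{C\left(\varepsilon_{2}-\varepsilon_{1}\right)}\Phi\left(\varepsilon_{1}\right)$, i.e.\ the monotonicity of $e^{-C\varepsilon}\Phi\left(\varepsilon\right)$, using only continuity of $\Phi$. (Also note that in the paper's application $u=K\left(\cdot,w\right)$ is complex-valued; this is harmless, since $\left|u\right|^{2}$ is still subharmonic by splitting into real and imaginary parts.) With these remarks your argument is complete as written.
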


We are ready to proceed Theorem \ref{Main-1}.

\begin{proof}[Proof of Theorem \ref{Main-1}]
Let $\varepsilon_{0}$ and $c_{1}$ be positive constants such that
\[
\sup_{0<\varepsilon<\varepsilon_{0}}\intop_{\delta=\varepsilon}\left|f\left(z\right)\right|^{2}d\sigma\left(z\right)\leq c_{1}\intop_{\partial\Omega}\left|f\left(z\right)\right|^{2}d\sigma\left(z\right),\forall f\in h^{2}\left(\Omega\right).
\]
We first assume that $c_{0}\delta\left(w\right)<\varepsilon_{0}$,
with $c_{0}:=\left.\left(e+1\right)\right/\left(e-1\right)$. By applying \eqref{eq:bnew1} in Proposition
\ref{BLO}, 
\begin{eqnarray}
\intop_{\left\{ G\left(\cdot,w\right)<-1\right\} }\left|K\left(z,w\right)\right|^{2}dV\left(z\right) & \leq & \intop_{\left\{ \delta\left(\cdot\right)\leq c_{0}\delta\left(w\right)\right\} }\left|K\left(z,w\right)\right|^{2}dV\left(z\right)\nonumber \\
 & \leq & \intop_{0}^{c_{0}\delta\left(w\right)}\left(\intop_{\delta=\varepsilon}\left|K\left(z,w\right)\right|^{2}d\sigma\left(z\right)\right)d\varepsilon\label{eq:fina1}\\
 & \leq & c_{0}c_{1}\delta\left(w\right)\intop_{\partial\Omega}\left|K\left(z,w\right)\right|^{2}d\sigma\left(z\right).\nonumber 
\end{eqnarray}
By using \eqref{herbo1} in Proposition \ref{HER}, 
\begin{equation}
\intop_{\left\{ G\left(\cdot,w\right)<-1\right\} }\left|K\left(z,w\right)\right|^{2}dV\left(z\right)\geq e^{-2n}K\left(w,w\right).\label{eq:fina2}
\end{equation}
Combining \eqref{eq:fina1} with \eqref{eq:fina2}, we conclude that 
\[
\left\Vert K\left(\cdot,w\right)\right\Vert _{L^{2}\left(\partial\Omega\right)}\geq C_{1}\sqrt{\dfrac{K\left(w,w\right)}{\delta\left(w\right)}},
\]
 for a positive constant $C_{1}$ depending on $\Omega$. 

For the case $c_{0}\delta\left(w\right)\geq\varepsilon_{0}$, using
\eqref{eq:hard1} we have 
\begin{eqnarray*}
\intop_{\partial\Omega}\left|K\left(z,w\right)\right|^{2}d\sigma\left(z\right) & \geq & C\intop_{\Omega}\left|K\left(z,w\right)\right|^{2}dV\left(z\right)\\
 & = & CK\left(w,w\right)\\
 & \geq & \dfrac{C\varepsilon_{0}}{c_{0}}\dfrac{K\left(w,w\right)}{\delta\left(w\right)}.
\end{eqnarray*}
Therefore we have proved the left-hand side of \eqref{eq:t4}.

We now turn to the proof of the right-hand side. Since $\Omega$ is
convex, the function $-\delta$ is convex on $\Omega$, and is thus
also plurisubharmonic on $\Omega$, see e.g. \cite{ArKu85}. By applying
Proposition \ref{Pro-1} with $\psi\left(z\right):=-r\log\left(\delta\left(z\right)\right)$,
we have 
\begin{equation}
\left(1-r\right)\intop_{\Omega}\left|P\left(f\right)\left(z\right)\right|^{2}\delta^{-r}\left(z\right)dV\left(z\right)\leq\intop_{\Omega}\left|f\left(z\right)\right|^{2}\delta^{-r}\left(z\right)dV\left(z\right),\label{eq:fina3}
\end{equation}
for any $0<r<1$ and any measurable function $f$. Inserting $$f\left(z\right):=\chi_{\left\{ G\left(\cdot,w\right)<-t\right\} }\left(z\right)K_{\left\{ G\left(\cdot,w\right)<-t\right\} }\left(z,w\right)$$
into \eqref{eq:fina3}, we obtain
\begin{equation}
\left(1-r\right)\intop_{\Omega}\left|K\left(z,w\right)\right|^{2}\delta^{-r}\left(z\right)dV\left(z\right)\leq\intop_{\left\{ G\left(\cdot,w\right)<-t\right\} }\left|K_{\left\{ G\left(\cdot,w\right)<-t\right\} }\left(z,w\right)\right|^{2}\delta^{-r}\left(z\right)dV\left(z\right),\label{eq:fina4}
\end{equation}
for any $t>0$. By Lemma \ref{chhard}, Proposition \ref{HER} and
Proposition \ref{BLO}, it continues 
\begin{eqnarray*}
\intop_{\partial\Omega}\left|K\left(z,w\right)\right|^{2}d\sigma\left(z\right) & = & \limsup_{r\rightarrow1^{-}}\left(1-r\right)\intop_{\Omega}\left|K\left(z,w\right)\right|^{2}\delta^{-r}\left(z\right)dV\left(z\right)\\
 & \leq & \intop_{\left\{ G\left(\cdot,w\right)<-t\right\} }\left|K_{\left\{ G\left(\cdot,w\right)<-t\right\} }\left(z,w\right)\right|^{2}\delta^{-1}\left(z\right)dV\left(z\right)\\
 & \leq & \dfrac{e^{t}+1}{e^{t}-1}\delta^{-1}\left(w\right)\intop_{\left\{ G\left(\cdot,w\right)<-t\right\} }\left|K_{\left\{ G\left(\cdot,w\right)<-t\right\} }\left(z,w\right)\right|^{2}dV\left(z\right)\\
 & = & \dfrac{e^{t}+1}{e^{t}-1}\delta^{-1}\left(w\right)K_{\left\{ G\left(\cdot,w\right)<-t\right\} }\left(w,w\right)\\
 & \leq & \dfrac{e^{t}+1}{e^{t}-1}e^{2nt}\dfrac{K\left(w,w\right)}{\delta\left(w\right)}.
\end{eqnarray*}
The desired inequality then follows by noting that $$\inf\left\{ \left.\left(e^{t}+1\right)e^{2nt}\right/\left(e^{t}-1\right):t>0\right\} <4en+1.$$
\end{proof}

\begin{remark}
Since the constant $C_{2}=\sqrt{4en+1}$ depends only on the dimension
$n$, it suggests for example a study of the sharp estimates in Theorem
\ref{Main-1}.
\end{remark}

\begin{remark}\label{gener}
The method used in the proof of Theorem \ref{Main-1} can be extended
to domains having a plurisubharmonic defining function, such as strongly
pseudoconvex domains and  Kohn special domains defined by 
\[
\Omega_{F}:=\left\{ z\in\mathbb{C}^{n}:\left|f_{1}\left(z\right)\right|^{2}+\ldots+\left|f_{m}\left(z\right)\right|^{2}<1\right\} ,
\]
where $F=\left(f_{1},\ldots,f_{m}\right):\mathbb{C}^{n}\rightarrow\mathbb{C}^{m}$
is a holomorphic map, see \cite{ChFu12}. It is known that for strongly pseudoconvex domains and Kohn special
domains, $\delta\left(z\right)\approx\delta\left(w\right)$ for $z\in\left\{ G\left(\cdot,w\right)<-1\right\} $,
see \cite{DiHe00,ChFu12,ChFu11}. As a result, the estimate $\left\Vert K\left(\cdot,w\right)\right\Vert _{L^{2}\left(\partial\Omega\right)}\thickapprox\left(\left.K\left(w,w\right)\right/\delta\left(w\right)\right)^{\left.1\right/2}$
holds true for these domains. For a general domain admitting a plurisubharmonic defining function,
we may use the estimates in \eqref{eq:bneww2}, which involve the
logarithmic terms. To be precise, let us state these as the following
corollary.
\end{remark}

\begin{corollary}
Let $\Omega$ be a bounded domain with $C^{2}$ boundary.
\begin{enumerate}
\item If $\Omega$ is either a strongly pseudoconvex domain or a Kohn special
domain then there exist positive constants $C_{1}$ and $C_{2}$ such
that for any $w\in\Omega$, 
\[
C_{1}\sqrt{\dfrac{K\left(w,w\right)}{\delta\left(w\right)}}\leq\left\Vert K\left(\cdot,w\right)\right\Vert _{L^{2}\left(\partial\Omega\right)}\leq C_{2}\sqrt{\dfrac{K\left(w,w\right)}{\delta\left(w\right)}}.
\]

\item If $\Omega$ is a pseudoconvex domain having a plurisubharmonic defining
function then there exist positive constants $C_{1}$ and $C_{2}$
such that for any $w\in\Omega$, 
\[
C_{1}\sqrt{\dfrac{K\left(w,w\right)}{\delta\left(w\right)\left|\log\delta\left(w\right)\right|^{n}}}\leq\left\Vert K\left(\cdot,w\right)\right\Vert _{L^{2}\left(\partial\Omega\right)}\leq C_{2}\sqrt{\dfrac{K\left(w,w\right)\left|\log\delta\left(w\right)\right|}{\delta\left(w\right)}}.
\]
\end{enumerate}
\end{corollary}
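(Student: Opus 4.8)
The plan is to repeat the two-sided argument of Section \ref{Main} with two substitutions forced by the loss of convexity. Since $-\delta$ need no longer be plurisubharmonic, I replace the weight: letting $\rho$ be a plurisubharmonic defining function of $\Omega$ (which exists in both cases, and satisfies $-\rho\approx\delta$ because $\rho\in C^{1}$ with non-vanishing gradient on $\partial\Omega$), I apply Proposition \ref{Pro-1} with $\psi:=-r\log\left(-\rho\right)$ in place of $-r\log\delta$. The only geometric input that changes is the sublevel-set inclusion: in part (2) I use \eqref{eq:bneww2} with $a=b=1$, namely
\[
\left\{ G\left(\cdot,w\right)<-1\right\} \subset\left\{ \tfrac{1}{C}\delta\left(w\right)\left|\log\delta\left(w\right)\right|^{-1}\leq\delta\left(\cdot\right)\leq C\delta\left(w\right)\left|\log\delta\left(w\right)\right|^{n}\right\},
\]
whereas in part (1) I use the sharper containment $\delta\left(\cdot\right)\approx\delta\left(w\right)$ on $\left\{ G\left(\cdot,w\right)<-1\right\}$, valid for strongly pseudoconvex and Kohn special domains by \cite{DiHe00,ChFu12,ChFu11}. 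I also normalize $\mathrm{diam}\,\Omega<1$, so that $\left|\log\delta\right|$ stays bounded below by a positive constant.

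For the upper bound I follow the chain \eqref{eq:fina3}--\eqref{eq:fina4} verbatim. Because $-\rho\approx\delta$ one has $\delta^{-r}\lesssim\left(-\rho\right)^{-r}$, so Lemma \ref{chhard} together with Proposition \ref{Pro-1} (applied to $f:=\chi_{\left\{ G<-1\right\} }K_{\left\{ G<-1\right\} }\left(\cdot,w\right)$, for which $P\left(f\right)=K\left(\cdot,w\right)$ by the reproducing identity used for \eqref{eq:fina4}) yields
\[
\intop_{\partial\Omega}\left|K\left(z,w\right)\right|^{2}d\sigma\lesssim\limsup_{r\rightarrow1^{-}}\intop_{\left\{ G<-1\right\} }\left|K_{\left\{ G<-1\right\} }\left(z,w\right)\right|^{2}\left(-\rho\right)^{-r}dV.
\]
On $\left\{ G<-1\right\}$ I bound $\left(-\rho\right)^{-r}\lesssim\delta^{-r}$ and insert the lower bound for $\delta$ from the chosen inclusion; this produces the factor $\delta\left(w\right)^{-1}\left|\log\delta\left(w\right)\right|$ in part (2) and $\delta\left(w\right)^{-1}$ in part (1). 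The remaining volume integral equals $K_{\left\{ G<-1\right\} }\left(w,w\right)\leq e^{2n}K\left(w,w\right)$ by \eqref{herbo2}, and letting $r\rightarrow1^{-}$ gives the stated right-hand sides.

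For the lower bound I reproduce \eqref{eq:fina1}--\eqref{eq:fina2}. The chosen inclusion gives $\left\{ G\left(\cdot,w\right)<-1\right\} \subset\left\{ \delta\left(\cdot\right)\leq R\left(w\right)\right\}$ with $R\left(w\right)=C\delta\left(w\right)$ in case (1) and $R\left(w\right)=C\delta\left(w\right)\left|\log\delta\left(w\right)\right|^{n}$ in case (2); slicing the collar and dominating each slice integral by the Hardy norm as in \eqref{eq:fina1} gives
\[
e^{-2n}K\left(w,w\right)\leq\intop_{\left\{ G<-1\right\} }\left|K\left(z,w\right)\right|^{2}dV\lesssim R\left(w\right)\intop_{\partial\Omega}\left|K\left(z,w\right)\right|^{2}d\sigma,
\]
the left inequality being \eqref{herbo1}; rearranging gives the stated left-hand sides. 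Both inclusions of Proposition \ref{BLO} hold only for small $\delta\left(w\right)$, so the regime where $\delta\left(w\right)$ is bounded below is treated separately exactly as in the proof of Theorem \ref{Main-1}: the lower bound follows from \eqref{eq:hard1}, while for the upper bound one notes that $\left\{ G\left(\cdot,w\right)<-1\right\}$ stays inside $\left\{ \delta\geq m\right\}$ uniformly for $w$ in a compact subset (by joint continuity of $G$ and the hyperconvexity of $\Omega$), which bounds $\delta^{-r}$ on the sublevel set and feeds into the same estimate.

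The main obstacle is checking that $\psi=-r\log\left(-\rho\right)$ is admissible for Proposition \ref{Pro-1}, i.e. that $r\,i\partial\overline{\partial}\psi\geq i\partial\psi\wedge\overline{\partial}\psi$. This is immediate from the remark following Proposition \ref{Pro-1}: the inequality is equivalent to the plurisubharmonicity of $-e^{-\psi/r}=\rho$, which holds by the choice of $\rho$, and $\psi$ is then automatically plurisubharmonic and locally bounded. The remaining delicate point is purely bookkeeping: the $r$-dependent constants coming from $-\rho\approx\delta$ all converge to finite nonzero limits as $r\rightarrow1^{-}$, so they pass through the $\limsup$ and only affect the size of $C_{1}$ and $C_{2}$.
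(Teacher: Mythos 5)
Your proposal is correct and takes essentially the same approach as the paper, whose own ``proof'' of this corollary is just the sketch in Remark \ref{gener}: rerun the proof of Theorem \ref{Main-1} with the weight $\psi=-r\log\left(-\rho\right)$ for a plurisubharmonic defining function $\rho\approx-\delta$ (admissible by the remark following Proposition \ref{Pro-1}), using $\delta\left(\cdot\right)\approx\delta\left(w\right)$ on $\left\{ G\left(\cdot,w\right)<-1\right\} $ in case (1) and \eqref{eq:bneww2} with $a=b=1$ in case (2). In fact your write-up supplies details the paper leaves implicit, notably the verification of the weight condition and the separate treatment of $w$ away from the boundary.
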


\end{document}